
\documentclass[a4paper,reqno,12pt]{amsart}


\usepackage[utf8]{inputenc}
\usepackage{amsmath, amssymb, amsthm, epsfig}
\usepackage{hyperref, latexsym}
\usepackage{url}
\usepackage[mathscr]{euscript}
\usepackage{color}
\usepackage{harpoon}
\usepackage{url}
\usepackage{mathabx}
\usepackage{tikz}
\usepackage{mathtools,fbox}
\usepackage{refcheck}
\DeclareMathAlphabet{\pazocal}{OMS}{zplm}{m}{n}

\usepackage{pgfplotstable}
\usepackage{pgfplots}


\usepackage{fullpage} 
\usepackage{setspace}

\setcounter{MaxMatrixCols}{30}


\usepackage{mathtools}
\mathtoolsset{showonlyrefs}

\usepackage{refcheck} 
\norefnames
\nocitenames


\def\today{\ifcase\month\or
  January\or February\or March\or April\or May\or June\or
  July\or August\or September\or October\or November\or December\fi
  \space\number\day, \number\year}

\DeclareMathOperator{\supp}{\mathrm{supp}}


\newtheorem{theorem}{Theorem}

\newtheorem{lemma}[theorem]{Lemma}
\newtheorem{proposition}[theorem]{Proposition}
\newtheorem{corollary}[theorem]{Corollary}

\newtheorem{definition}{Definition}



\newcommand{\A}{\mathcal{A}}

\newcommand{\C}{\mathcal{C}}

\newcommand{\N}{\mathcal{N}}

\renewcommand{\P}{\mathcal{P}}

\renewcommand{\S}{\mathcal{S}}


\newcommand{\NN}{\mathfrak{N}}


\newcommand{\n}{\mathbb{N}}
\newcommand{\z}{\mathbb{Z}}
\newcommand{\q}{\mathbb{Q}}
\renewcommand{\r}{\mathbb{R}}
\newcommand{\cp}{\mathbb{C}} 
\newcommand{\im}{{\rm Im}\,}
\newcommand{\re}{\Re}
\newcommand{\ft}{\widehat}


\newcommand{\bo}{\boldsymbol}

\newcommand{\wt}{\widetilde}



\newcommand{\la}{\lambda}
\newcommand{\ga}{\gamma}
\newcommand{\al}{\alpha}

\newcommand{\ep}{\varepsilon}

\newcommand{\p}{\varphi}

\renewcommand{\d}{\mathrm{d}}
\newcommand{\ov}[1]{\overline{#1}}
\newcommand{\w}{\ov{w}}
\newcommand{\cd}{\cdot}
\newcommand{\1}{{\bf 1}}
\newcommand{\ra}{\rightarrow}

\newcommand{\e}{\mathbb{E}}
\newcommand{\mt}{\mapsto}
\newcommand{\del}{{\bo \delta}}

\renewcommand{\Re}{{\rm Re}\,}
\renewcommand{\Im}{{\rm Im}\,}
\newcommand{\res}[1]{ \mathop{\mathrm{Res}}_{z = #1}\,}
\newcommand{\spec}{{\rm spec}}


\begin{document}


\title[]{A Complete Classification of Fourier Summation Formulas on the real line}
\author{Felipe Gonçalves and Guilherme Vedana}
\date{\today}
\subjclass[2010]{}
\keywords{}
\address{The University of Texas at Austin, 2515 Speedway, Austin, TX 78712, USA  \newline   {\&} IMPA - Instituto de Matemática Pura e Aplicada, Rio de Janeiro, 22460-320, Brazil.}
\email{goncalves@utexas.edu}
\address{IMPA - Estrada Dona Castorina 110, Rio de Janeiro, RJ - Brasil, 
22460-320
}
\email{guilherme.israel@impa.br}
\allowdisplaybreaks


\begin{abstract}
We completely classify Fourier summation formulas of the form
$$
 \int_{\r} \ft{\varphi}(t) d\mu(t)=\sum_{n=0}^{\infty} a(\la_n)\varphi(\la_n),
$$
that hold for any test function $\p$, where $\ft \p$ is the Fourier transform of $\p$, $\mu$ is a fixed complex measure on $\r$ and $a:\{\la_n\}_{n\geq 0}\to\cp$ is a fixed function. We only assume the decay condition
$$
\int_{\r} \frac{d |\mu|(t)}{(1+t^2)^{c_1}} + \sum_{n\geq 0} |a(\la_n)|e^{-c_2 |\la_n|}<\infty,
$$
for some $c_1,c_2>0$. This completes the work initiated by the first author previously, where the condition $c_1\leq 1$ was required. We prove that any such pair $(\mu,a)$ can be uniquely associated with a holomorphic map $F(z)$ in the upper-half space that is both almost periodic and belongs to a certain higher index Nevanlinna class. The converse is also true: For any such function $F$ it is possible to generate a Fourier summation pair $(\mu,a)$. We provide important examples of such summation formulas not contemplated by the previous results, such as Selberg's trace formula.
\end{abstract}


\maketitle



\section{Introduction}
{Fourier Summation pairs} (FS-pairs, for short) play a key role in the study of several questions in Number Theory, from geometry of numbers to analytic number theory. The importance of such formulas relies on the fact that they establish a critical relation between two important quantities: Phase and frequency information. The  probably most well-known  example of a FS-pair is given by the {Poisson Summation formula}
\begin{equation}
    \sum_{v\in\z} \p(v)=\sum_{u\in\z} \ft{\p}(u) ,
\end{equation}
that holds for any test\footnote{In this paper, a test function $\p:\r\to\cp$ is always $C^\infty$ and compactly supported.} function $\p$. Throughout this paper we use the following normalization of the Fourier transform
\begin{align*}
\ft{f}(\xi)=\int_{\r^d} f(x)e^{-2\pi ix\cd\xi}dx.
\end{align*}
Poisson summation and its higher dimensional versions is a fundamental tool in Number Theory, with countless applications. A FS-pair is a generalization of the Poisson Summation formula: It is a pair $(\mu,a)$, where $\mu$ is a complex measure on $\r$ and $a:\r\rightarrow\cp$ is a function with countable support, such that
\begin{equation}\label{protofspair}
    \int_{\r} \ft{\varphi}(t) d\mu(t)=\sum_{n=0}^{\infty} a(\la_n)\varphi(\la_n)
\end{equation}
holds for any test function $\p$, where $\{\la_n\}_{n=0}^\infty=\supp(a)$ is some enumeration of the support of $a(\cd)$. 

There are plenty of examples of FS-pairs in Number Theory. The {Guinand-Weil explicit formula} (see, for instance, \cite[Thm. 12.13]{MV}) and, more generally, the {Selberg Trace formula} (see \cite[Thm. 5.6]{Ber}), all generate FS-pairs. Furthermore, the recent Radchenko-Viazovska interpolation formulas in \cite{RV} can also be seen as FS-pairs (see also \cite{RS,KNS}). For a more recent account of different kinds of FS-pairs see the last section in \cite{G23} (see also Section \ref{sec:exe}). These summation formulas also appear in {Physics}. The special class where the support of both $\mu$ and $a(\cd)$ are locally finite, which is called a {crystalline} pair, is used in {Crystallography} for the reconstruction of the atomic structure of {crystals}. If the structure of the crystal is represented by a measure $\mu$ (this is unknown, a priori), then diffraction experiments provide the values of $|a|^2$ from which one can numerically recover $\mu$. Therefore, a complete characterization of the crystalline pairs (and more generaliy FS-pairs) can be a useful tool.

The main objective of this paper is to finish the classification initiated by the first named author in \cite{G23}, where formulas of the type \eqref{protofspair} were classified under the assumption that $\int_{\r} (1+t^2)^{-1}\d|\mu|(t)<\infty$. Here, we found a way to circumvent the issues in \cite{G23} and allow any polynomial growth for $\mu$, that is, we only require that $\int_{\r} (1+t^2)^{-c}\d|\mu|(t)<\infty$ for some $c>0$. The results of this paper are heavily inspired and motivated by recent work  in the field, such as: The new summation formulas produced by Radchenko and Viazovska \cite{RV}, Bondarenko, Radchenko and Seip \cite{BRS}, Kurasov and Sarnak \cite{KS}, Kulikov, Nazarov and Sodin \cite{KNS} and Ramos and Sousa \cite{RS,RS1}; The recent results on Lee-Yang polynomials of Alon, Cohen and Vinzant \cite{ACV,AV}; The classification of crystalline pairs of Lev and Olevskii  \cite{LO,LO2} and Olevskii and Ulanovskii \cite{OU}.

\section{Main results}

In order to state our main results we will need some preparatory definitions. We follow the notation in \cite{G23} closely. We say that a complex-valued Borel measure $\mu$ on $\r$ is {strongly tempered} if
\begin{align*}
\deg(\mu):=\inf \left\{n\in\z; \int_{\r} \frac{d|\mu|(t)}{(1+t^2)^{n/2}}<\infty\right\} < \infty.
\end{align*}
In this case, the map $\p\in\S(\r)\mt\int_{\r} \p(t) d\mu(t)$ defines a tempered distribution. A function $a:\r\ra\cp$ is called {\bf locally summable} if its support $\supp(a):=\{\la\in\r;a(\la)\neq0\}$ is countable, and for some (and hence for all) enumeration $\supp(a)=\{\la_n;n\geq0\}$, the sum
\begin{align*}
\sum_{n\geq0;\,\la_n\in[-T,T]} |a(\la_n)| <\infty
\end{align*}
for any $T>0$.  In addition, we say that $a(\cd)$ has {\bf finite exponential growth} if
\begin{align*}
\sum_{n\geq0} |a(\la_n)|e^{-c|\la_n|}<\infty,
\end{align*}
for some $c>0$. In particular the sum  $\sum_{\la\in \r} a(\la)\p(\la)$ is well-defined for any function $\p$ such that $|\p(\la)| \lesssim e^{-c|\la|}$ (for instance if $\p$ is a test function). At this point we are able to define what a {Fourier summation pair} is.

\begin{definition}[Fourier summation pair]
A {Fourier summation pair} (FS-pair) is a pair $(\mu,a)$, where $\mu$ is a strongly tempered measure on $\r$, $a:\r\ra\cp$ is a locally summable function, such that
\begin{align}\label{def:fspair}
\int_{\r} \ft{\p}(t)d\mu(t)=\sum_{\la\in\r} a(\la)\p(\la)
\end{align}
holds for any test function $\p$.
\end{definition}
In view of this definition, the decay condition
$$
\int_{\r} \frac{d |\mu|(t)}{(1+t^2)^{c_1}} + \sum_{n\geq 0} |a(\la_n)|e^{-c_2 |\la_n|}<\infty,
$$
for some $c_1,c_2>0$, seems to be the most weak assumption one can impose. In other words, that $\mu$ has finite degree and $a(\cdot)$ has finite exponential growth. Moreover, to the best of our knowledge, we are unaware of any summation formula as \eqref{def:fspair} that holds for every smooth compactly supported function $\p$, but does not satisfy the above decay condition. This is the only decay condition our main result Theorem \ref{thm:main} is going to assume, which makes it quite general.

In order to avoid technicalities, we will state our results for FS-pairs $(\mu,a)$ for which $\mu$ is a real measure. This implies automatically that $a(-\la)=\ov{a(\la)}$ for any $\la$, a property that we call {antipodal} and the pair $(\mu,a)$ is then called {\bf real-antipodal}. Given any FS-pair $(\mu,a)$, we can split it into two real-antipodal FS-pairs $(\mu_1,a_1)$ and $(\mu_2,a_2)$ by defining $\mu_1=\Re(\mu)$, $\mu_2=-\Im(\mu)$, $a_1(\la):=(a(\la)+\ov{a(-\la)})/2$ and $a_2(\la):=-i(\ov{a(-\la)}-a(\la))/2$, so that $a=a_1-ia_2$.

\subsection*{Almost Periodic Class} Let $\cp^+:=\{z\in\cp;\Im(z)>0\}$ be the complex upper-half plane. A holomorphic map $F(z)$ defined on $\cp^+$ is said to be {almost periodic} if, for any $0<\al<\beta<\infty$ and $\ep>0$, there exists a relatively dense\footnote{This means that there exists $l>0$ such that $\tau\cap(x,x+l)\neq\emptyset$ for any $x\in\r$.} set of translations $\tau\subset\r$, which may depend on $\al,\beta$ and $\ep$, such that
\begin{align*}
\sup_{\al<\Im(z)<\beta} |F(z+t)-F(z)|\leq\ep,\,\,\,\, \text{ for any } t\in\tau.
\end{align*}
We denote this class of almost periodic functions by $\A\P(\cp^+)$.
In this case we can define an analogous of a Fourier coefficient for $F(z)$: For any $\la\in\r$, the limit
\begin{align*}
\e F(\la):=\lim_{T\ra\infty} \frac{1}{2T} \int_{-T+iy}^{T+iy} F(z)e^{-2\pi i\la z}dz
\end{align*}
does exist and does not depend on $y>0$. In particular, if $F(z+ic)$ is almost periodic, then the above limit exists and is independent of $y>c$.  If $F(z+ic)\in\A\P(\cp^+)$, its {spectrum} is defined by
\begin{align*}
\spec(F):=\{\la\in\r;\e F(\la)\neq0\},
\end{align*}
and it is a countable set. For more information about almost periodic functions, see for instance \cite{Be,Bo,G23}.

\subsection*{The Nevanlinna Class}The higher order holomorphic Nevanlinna class $\NN_{\leq k}$ is defined as the set of all holomorphic maps $F:\cp^+\ra\cp$ such that for any choice of $z_1,...,z_N \in \cp^+$, the Hermitian matrix
\begin{align*}
\left[i\frac{F(z_n)+\ov{F(z_m)}}{z_n-\ov{z_m}}\right]_{1\leq n,m\leq N}
\end{align*}
has at most $k$ negative eigenvalues (counting multiplicities). 
We define
$$
\NN_{\leq k}-\NN_{\leq k}:=\{F-G;F,G\in\NN_{\leq k}\}.
$$
For more information, see Section 3. 

The following is the main result of this paper. It states that if we have a FS-pair $(\mu,a)$, then we can associate a holomorphic function $F(z)$ in $\cp^+$ that is at the same time almost periodic and belongs to the class $\NN_{\leq k}-\NN_{\leq k}$. This function $F(z)$ encapsulates the information contained in the pair $(\mu,a)$ by having the coefficients of its ``Fourier series'' given by the function $a(\cd)$ and $\mu$ is the measure from its Nevanlinna factorization. The converse is also true: starting from any such function $F(z)$, it is possible to build a FS-pair. In short, the results in the paper answer the following question: 

\emph{What is a FS-pair? It is a function in the following class}
$$
\bigg(\bigcup_{k\in \z_+} ( \NN_{\leq k}-\NN_{\leq k})\bigg) \cap \bigg(\bigcup_{c\in \r_+} \A\P(\cp^++ic)\bigg).
$$
\begin{theorem}[Classification of FS-pairs]
\label{thm:main}
Let $(\mu,a)$ be a real-antipodal FS-pair such that $a(\cd)$ has finite exponential growth and that $\deg(\mu)\leq 2(k+1)$. Then, to the pair $(\mu,a)$ corresponds a unique holomorphic map $F(z)$ in $\cp^+$ which satisfies the following properties:

\begin{itemize}
\item[(I)] $F(z) \in \NN_{\leq k}-\NN_{\leq k}$;

\item[(II)] $F(\cd+ic_1)\in \A\P(\cp^+)$ for some $c_1>0$;

\item[(III)] $\la \mapsto \e F(\la)$ is a function of finite exponential growth, this is, $\sum_{\la\in\r} |\e F(\la)|e^{-c_2|\la|}<\infty$, for some $c_2>0$.

\end{itemize}
This function $F$ is given by the following identities
\begin{align}\label{id:Finthm}
F(z)= \frac{(z^2+1)^k}{2\pi i} \int_{\r} \frac{1+tz}{t-z}\cd\frac{d\mu(t)}{(1+t^2)^{k+1}}+iQ(z) = \frac{1}{2}a(0)+\sum_{\la>0} a(\la)e^{2\pi i\la z},
\end{align}
where $Q(z)$ is a real polynomial of degree $\leq 2k$. The first identity above holds for all $z\in \cp^+$, while the second only if $\Im z>c_1$. Both expressions converge absolutely in these domains.

Conversely, if $F(z)$ is a holomorphic map in $\cp^+$ satisfying properties (I),(II) and (III), then we can construct a real-antipodal FS-pair $(\mu,a)$ where $a(\cd)$ has finite exponential growth and $\deg(\mu)\leq 2(k+1)$. More precisely, there exists $c_1>0$ such that the limit
\begin{equation}
\e F(\la):=\lim_{T\ra\infty} \frac{1}{2T}\int_{-T+iy}^{T+iy} F(z)e^{-2\pi i\la z}dz
\end{equation}
does exist for all $\la\in\r$ and $y>c_1$, and does not depend on $y$. Moreover, the function $\la\mt\e F(\la)$ vanishes for $\la<0$ and has finite exponential growth. The function $a(\cdot)$ is defined by
\begin{equation}
\label{eq:def_function_a}
a(\la):=\begin{cases}
\e F(\la) & \text{if } \la>0,\\
2\Re \e F(0) & \text{if } \la=0,\\
\ov{\e F(-\la)} & \text{if } \la<0.
\end{cases}
\end{equation}
The measure $\mu$ is the unique real-valued measure coming from the Nevanlinna factorization \eqref{id:nevfact} of $F(z)$.
\end{theorem}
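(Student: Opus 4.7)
My plan is to define the candidate
\begin{equation*}
F_0(z):=\frac{(z^2+1)^k}{2\pi i}\int_\r \frac{1+tz}{t-z}\cd\frac{d\mu(t)}{(1+t^2)^{k+1}},\qquad z\in\cp^+,
\end{equation*}
which is well-defined because $\deg(\mu)\leq 2(k+1)$ makes the weighted measure $d|\mu|(t)/(1+t^2)^{k+1}$ finite. Splitting $\mu=\mu_+-\mu_-$ into its Hahn-Jordan pieces and checking the Hermitian form directly, each weighted Herglotz-type integral with multiplier $(z^2+1)^k$ sits in $\NN_{\leq k}$ (the power $(z^2+1)^k$ is responsible for creating exactly $k$ extra negative eigenvalues), so $F_0\in\NN_{\leq k}-\NN_{\leq k}$, yielding (I).

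\textbf{From integral to Dirichlet series.} The core identity to establish is
\begin{equation*}
F_0(z)+iQ(z)=\tfrac{1}{2}a(0)+\sum_{\la>0} a(\la)\,e^{2\pi i\la z},
\end{equation*}
valid for $\Im z>c_1$ (with $c_1$ depending on the exponential growth rate of $a$), with $Q$ a real polynomial of degree $\leq 2k$. The morally correct test function is $\p_z(\la):=2\pi i\,e^{2\pi i\la z}\1_{\la\geq 0}$, whose formal Fourier transform is the Cauchy kernel $(t-z)^{-1}$. Since $\p_z$ is not smooth and $(t-z)^{-1}$ is not integrable against $\mu$, I would approximate $\p_z$ by smooth compactly supported symmetric cutoffs $\p_{z,N}$ and apply \eqref{def:fspair}. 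On the spectral side, finite exponential growth of $a(\cd)$ combined with the half-plane condition on $\Im z$ allows a dominated convergence argument giving $\sum a(\la)\p_{z,N}(\la)\to \tfrac12 a(0)+\sum_{\la>0}a(\la)e^{2\pi i\la z}$, with the half-weight at $\la=0$ produced by the symmetric smoothing of $\1_{\la\geq 0}$ at its jump. On the Fourier side, the exact decomposition
\begin{equation*}
\frac{1}{t-z}=\frac{(z^2+1)^k(1+tz)}{(t-z)(1+t^2)^{k+1}}+R(t,z),
\end{equation*}
with $R(t,z)$ polynomial in $t$ of degree $\leq 2k$, produces $F_0(z)$ as the principal term and the polynomial correction $iQ(z):=\int_\r R(t,z)\,d\mu(t)/(1+t^2)^{k+1}$, where the antipodal symmetry of $(\mu,a)$ forces $Q$ to be real.

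\textbf{Main obstacle and properties (II), (III), uniqueness.} The rigorous passage from \eqref{def:fspair} applied to $\p_{z,N}$ to the limiting identity above is the delicate technical core. It requires uniform control of $\ft{\p_{z,N}}(t)-\ft{\p_z}(t)$ weighted by $(1+t^2)^{-(k+1)}d|\mu|$, and a compatible treatment of the polynomial remainder, which must pair well against the full $\mu$ and not just its truncations. This is precisely the obstruction that forced $c_1\leq 1$ in \cite{G23}, and the higher-index kernel $(z^2+1)^k(1+tz)/[(t-z)(1+t^2)^{k+1}]$ is designed exactly to absorb the polynomial growth of $\mu$. Once this step is settled, property (II) follows from uniform absolute convergence of the Dirichlet series on horizontal strips in $\{\Im z>c_1\}$, property (III) is immediate since $\e F(\la)=a(\la)$ for $\la>0$, and uniqueness of $F$ is obtained by pinning down $Q$ via asymptotic matching of both expressions in \eqref{id:Finthm} at infinity.

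\textbf{Converse direction.} Given $F$ satisfying (I)-(III), almost periodicity of $F(\cd+ic_1)$ gives well-defined mean-values $\e F(\la)$ for $y>c_1$, independent of $y$. The support of $\la\mt\e F(\la)$ must lie in $[0,\infty)$: sending $y\ra\infty$ in the mean-value integral and using the decay of the Dirichlet series representation implied by (III) kills all negative-spectrum coefficients. The measure $\mu$ comes from the higher-order Herglotz representation applied to the Nevanlinna decomposition $F=F_+-F_-$ with $F_\pm\in\NN_{\leq k}$, with $\mu$ recovered from the boundary behavior of $F$ via Stieltjes-Perron inversion. Finally, to verify \eqref{def:fspair} for a test function $\p$, I would pair $\ft\p$ against the integral representation of $F$ on one side and $\p$ against the Dirichlet series on the other, using contour deformation at height $y>c_1$ to identify the two expressions -- essentially running the forward argument in reverse for a genuinely smooth $\p$.
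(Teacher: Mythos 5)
Your plan correctly identifies the two sides of the formula and the need to pair the FS identity against a Cauchy-type kernel, but it leaves the genuinely hard step unresolved, and the way you set it up cannot be repaired by approximation alone. The test function $\p_z(\la)=2\pi i\,e^{2\pi i\la z}\1_{\la\geq 0}$ has $\ft{\p_z}(t)=1/(t-z)$, which decays only like $1/|t|$. Since the standing assumption is merely $\deg(\mu)\leq 2(k+1)$, the integral $\int_\r |t-z|^{-1}\,d|\mu|(t)$ is in general divergent for $k\geq 1$, so the cutoff integrals $\int \ft{\p_{z,N}}\,d\mu$ do \emph{not} converge to any fixed object as $N\to\infty$, and no dominated-convergence argument can force them to. Your proposed repair, the decomposition $\tfrac{1}{t-z}=\tfrac{(z^2+1)^k(1+tz)}{(t-z)(1+t^2)^{k+1}}+R(t,z)$, does not help either: the remainder $R(t,z)$ is not a polynomial in $t$ but a rational function $M(t)/(1+t^2)^{k+1}$ with $\deg_t M=2k+1$, still only $O(1/|t|)$, so $\int R(t,z)\,d\mu(t)$ (or the doubly-weighted version you write) is not a priori well-defined, and the ``polynomial correction'' $Q$ has no valid definition at this stage. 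In short, you have pushed the problem into a step which you yourself label ``the main obstacle'' without supplying the mechanism.

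The paper's mechanism is a genuinely different one and is exactly the ingredient your sketch is missing. Instead of a one-point Cauchy kernel, the proof uses a \emph{two-point} kernel $G_k(w,z,\cdot)$ whose Fourier transform is $\tfrac{1}{2\pi^{k+1}i}\cdot\tfrac{1}{(t-z)(t-\overline{w})(1+t^2)^{k}}$: the factor $(1+t^2)^{-k}$, obtained by convolving $G_0$ with the $k$-fold convolution $A_k$ of $e^{-2\pi|\cdot|}$, produces decay $O(|t|^{-2k-2})$, which \emph{is} integrable against $\mu$ with $\deg(\mu)\leq 2(k+1)$. Combined with a Fej\'er-type kernel $\ft{S_k}$ ($(2k+2)$-fold convolution of indicators) to regularize the spectral side, this yields the Bridge Lemma \ref{thm:bridge}, which is a rigorous two-variable identity converging uniformly on truncated cones $R_c$. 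The Nevanlinna representation of $F$ is then extracted by multiplying by $(z-\overline w)$ and isolating the $z$-dependence from the $w$-dependence (Lemma \ref{thm:prop_aux_func}(iii)-(iv)), which gives $R(z)+\overline{R(w)}=H(z)+\overline{H(w)}$ and hence $R=H+ih$ by analytic continuation. The polynomial $Q$ and the identification with the Dirichlet series then come out of this decomposition, not from a formal expansion of $1/(t-z)$. Your item (I) (``checking the Hermitian form directly'') and the uniqueness/asymptotic-matching of $Q$ are fine in principle, but the central convergence step needs the two-point kernel idea, which is what actually lets the result escape the $c_1\leq 1$ restriction of \cite{G23}. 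The converse-direction outline is closer to the paper's (mean values, vanishing of negative spectrum via Phragm\'en--Lindel\"of, Stieltjes inversion for $\mu$), but again the final verification of the summation identity in the paper goes through a specific two-sided Poisson-type identity for $H(z+s)+\overline{H(-\overline z+s)}$ and Bochner approximation by trigonometric polynomials, which you allude to (``running the forward argument in reverse'') but do not actually carry out.
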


The polynomial $Q$ is uniquely defined by identity \eqref{id:Finthm} and it has degree $2k$ (and not  $2k+1$ as in \eqref{id:nevfact}) since $$
\lim_{y\to \infty}\re \frac{F(iy)-iQ(iy)}{y(1-y^2)^k} = \lim_{y\to \infty} \int_\r \frac{\d \mu(t)}{2\pi(y^2+t^2)(1+t^2)^k} = 0,
$$
and $F(iy)\to \frac12 a(0)$ as $y\to \infty$.

Regarding the converse, it turns out that we can weaken the assumption of finite exponential growth of the Fourier coefficients $\e F(\la)$ (property (III)) and just require local summability. In this case we can also construct a FS-pair $(\mu,a)$, though now the function $a(\cd)$ is no longer necessarily of finite exponential growth.

\begin{theorem}
\label{thm:converse_main}
Let $F \in \NN_{\leq k}-\NN_{\leq k}$ be such that $F(\cd+ic)\in \A\P(\cp^+)$ for some $c>0$ and assume that the function $\la\in\r\mt\e F(\la)$ is locally summable. Then one can associate a real-antipodal FS-pair $(\mu,a)$ exactly as in Theorem \ref{thm:main}, except now that $a(\cdot)$ is only locally summable. 
\end{theorem}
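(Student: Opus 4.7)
The plan is to regularize $F$ by an upward shift, apply Theorem~\ref{thm:main} to the regularized functions, and pass to the limit. I define $\mu$ via the Nevanlinna factorization of $F$ coming from (I), and $a$ from $\e F$ through \eqref{eq:def_function_a} (which makes sense by (II) and local summability); it remains to verify \eqref{def:fspair} for every test $\p$. As a preliminary step, I would show $\e F(\la)=0$ for $\la<0$. The factorization \eqref{id:Finthm} combined with $\deg(\mu)\leq 2(k+1)$ yields a polynomial bound $|F(x+iy)|\leq Cy^N$ uniformly in $x\in\r$ for $y>c$ (the $x$-uniformity coming from almost periodicity on $\cp^++ic$, which forces boundedness in $x$ on each finite horizontal strip). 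In the defining limit of $\e F(\la)$ at height $y$, the factor $|e^{-2\pi i\la z}|=e^{2\pi\la y}$ sends the integrand to $0$ as $y\to\infty$ when $\la<0$; since $\e F(\la)$ does not depend on $y$, it must vanish.

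Next, set $F_\epsilon(z):=F(z+i\epsilon)$ for $\epsilon>0$. The Fourier coefficients satisfy $\e F_\epsilon(\la)=e^{-2\pi\epsilon\la}\e F(\la)$, which vanishes for $\la<0$ by the previous step and decays exponentially for $\la>0$, so $\e F_\epsilon$ has finite exponential growth. Almost periodicity of $F_\epsilon(\cdot+ic)$ is inherited directly from $F$. The delicate point is that upward shifts preserve the generalized Nevanlinna classes, so $F_\epsilon\in\NN_{\leq k}-\NN_{\leq k}$; one can check this either via the factorization of such functions into a Herglotz factor times a real rational with zeros and poles in $\cp^-$ (which stay in $\cp^-$ under an upward shift), or directly from the integral representation \eqref{id:Finthm}. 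Theorem~\ref{thm:main} then applies to $F_\epsilon$ and produces a FS-pair $(\mu_\epsilon,a_\epsilon)$ with $a_\epsilon(\la)=e^{-2\pi\epsilon|\la|}a(\la)$ satisfying
$$\int_\r\ft\p(t)\,d\mu_\epsilon(t)=\sum_{\la\in\r}a_\epsilon(\la)\p(\la)\quad\text{for every test }\p.$$

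Finally, pass $\epsilon\to 0^+$. The right-hand side converges to $\sum a(\la)\p(\la)$ by dominated convergence, using local summability of $a$ on any bounded interval containing $\supp\p$. For the left-hand side one needs $\int\ft\p\,d\mu_\epsilon\to\int\ft\p\,d\mu$. Applying Stieltjes--Plemelj inversion to the factorization \eqref{id:Finthm} of $F_\epsilon$, and comparing with \eqref{id:Finthm} evaluated at $z+i\epsilon$ for $F$, together with uniqueness of the Nevanlinna measure, identifies $\mu_\epsilon$ essentially as a Poisson smoothing of $\mu$ at height $\epsilon$: schematically, the weighted measure $d\mu_\epsilon(t)/(1+t^2)^{k+1}$ equals $P_\epsilon\ast[d\mu(\cdot)/(1+(\cdot)^2)^{k+1}](t)\,dt$ plus a correction that vanishes weakly as $\epsilon\to 0$. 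Since $(1+t^2)^{k+1}\ft\p(t)$ is Schwartz, Fubini combined with the uniform convergence $P_\epsilon\ast g\to g$ for bounded continuous $g$ yields the desired limit. The main obstacle is precisely this explicit identification of $\mu_\epsilon$: for the Herglotz case $k=0$ it reduces to classical Poisson smoothing plus an innocuous Lebesgue correction, but for general $k$ one must carefully expand the weighted Cauchy-type kernel $\tfrac{1+tz}{t-z}(1+t^2)^{-(k+1)}$ and the polynomial prefactor $(z^2+1)^k$ from \eqref{id:Finthm} under the shift $z\mapsto z+i\epsilon$, and check that all boundary contributions coming from $Q$ and $(z^2+1)^k$ assemble correctly.
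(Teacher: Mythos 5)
Your regularization scheme is attractive but has two genuine gaps, one structural and one technical. The structural one is circularity: you invoke the converse (sufficiency) direction of Theorem~\ref{thm:main} applied to $F_\epsilon$, but the paper proves that converse \emph{by} proving Theorem~\ref{thm:converse_main} (the hypotheses of \ref{thm:converse_main} are weaker, so it subsumes the sufficiency of \ref{thm:main}). As written, you are reducing the theorem to itself. To salvage this you would need an independent proof of the converse of Theorem~\ref{thm:main} under the exponential-growth hypothesis on $\e F$; you do not supply one, and it is unclear that the exponential-growth assumption actually simplifies that proof enough to give an independent route.

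The technical gap is the one you yourself flag as the ``main obstacle'' and then leave open: identifying $\mu_\epsilon$ and passing to the limit. For $k=0$ the measure from the factorization of $F_\epsilon$ is a Poisson average of $\mu$ plus an innocuous linear correction, and weak-$*$ convergence is classical. But for $k>0$ the shifted prefactor $((z+i\epsilon)^2+1)^k$ and the shifted kernel $\tfrac{1+t(z+i\epsilon)}{t-z-i\epsilon}$ do \emph{not} assemble into the required form $\tfrac{(z^2+1)^k}{2\pi i}\int\tfrac{1+tz}{t-z}\tfrac{d\mu_\epsilon(t)}{(1+t^2)^{k+1}}+iQ_\epsilon(z)$ without nontrivial algebra and new boundary terms; the ``correction that vanishes weakly'' needs to be exhibited and controlled, and until it is, the left-hand side limit $\int\ft\p\,d\mu_\epsilon\to\int\ft\p\,d\mu$ is unjustified. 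Similarly, your claim that $\NN_{\leq k}-\NN_{\leq k}$ is stable under upward shifts is not immediate from the Pick-matrix definition (the denominators $z_n-\ov{z_m}$ do not shift with the points) and needs a proof, e.g.\ via the Kre\u{\i}n--Langer factorization.

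The paper avoids all of this by arguing directly, without regularization: it sets $H(z)=F(z)-iQ(z)$, writes a Poisson-kernel-type identity for $H(z+s)+\ov{H(-\ov z+s)}$ in terms of $\mu$, tests both sides against $\ft\p$, handles the ``phase'' side by Bochner's approximation of the almost-periodic boundary function by trigonometric polynomials, handles the ``frequency'' side by letting $z=iy\downarrow0$ in the Poisson kernel, and then glues the two via analytic continuation in $z$. This direct route is what you would want to emulate, or else fill in the two gaps above.
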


\section{The Nevanlinna class}\label{sec:neva}
All facts described in this section can be found in \cite{DL,KL,KW}.
The {Nevanlinna class of index $k=0$} is the set of all holomorphic maps $F:\cp^+\ra\cp$ such that $\Re F(z)\geq0$ for any $z\in\cp^+$. It can be shown that this is equivalent to $F$ having the following {Poisson representation} \cite[Thm 4]{dB}
\begin{equation}
\label{eq:Nevanlinna_fact_0}
F(z)=iQ(z)+\frac{1}{2\pi i}\int_{\r} \frac{1+tz}{t-z}\cd\frac{d\mu(t)}{1+t^2},
\end{equation} 
where $\mu$ is a nonnegative Borel measure on $\r$ of degree $\deg(\mu)\leq 2$ and $Q(z)=a+bz$ with $b\leq 0$. The {(generalized) Nevanlinna class of index $\leq k$} is defined as the set of all meromorphic maps $F:\cp^+\ra\cp$ such that, for any choice $z_1,...,z_N\in \cp^+$, the matrix
\begin{align*}
\left[i\frac{F(z_n)+\ov{F(z_m)}}{z_n-\ov{z_m}}\right]_{1\leq n,m\leq N}
\end{align*}
has at most $k$ negative eigenvalues.  The next proposition is a restatement of  \cite[Prop. 2.1]{DL} for the holomorphic scalar case (see also \cite[eq. (4)]{DLLS})

\begin{proposition}[]\label{prop:neva}
Let $F:\cp^+\ra\cp$ be a holomorphic function in the generalized Nevanlinna class of index $\leq k$. Then it is possible to write
\begin{equation}
\label{eq:Nevanlinna_fact_k}
F(z)=\frac{(z^2+1)^{m}}{2\pi i}\int_{\r} \frac{tz+1}{t-z}\cd\frac{d\mu(t)}{(1+t^2)^{m+1}}+iQ(z),
\end{equation}
where $\mu$ is a nonnegative Borel measure on $\r$ such that $\deg(\mu)\leq 2(m+1)$and $Q(z)=a_{2m+1}z^{2m+1}+...+a_1z+a_0$ is a real polynomial of degree $\leq 2m+1$ such that $a_{2m+1}\leq 0$.  Conversely, any function defined by \eqref{eq:Nevanlinna_fact_k} with $m\leq k$ (and the same constraints on $\mu$ and $Q$) defines a holomorphic function in the generalized Nevanlinna class of index $\leq k$.
\end{proposition}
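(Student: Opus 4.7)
\emph{Proof plan.} The strategy is to bootstrap from the classical $k=0$ case (equation \eqref{eq:Nevanlinna_fact_0}) to higher index, invoking the Krein-Langer structure theory of \cite{KL,DL}.

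For the converse (sufficiency) direction, given $F$ of the form \eqref{eq:Nevanlinna_fact_k}, I would write
$$F(z) = (z^2+1)^m G(z) + iQ(z), \qquad G(z) := \frac{1}{2\pi i}\int_\r \frac{tz+1}{t-z}\cdot\frac{d\mu(t)}{(1+t^2)^{m+1}}.$$
Since $d\mu/(1+t^2)^m$ is a positive measure of degree at most $2$, the classical representation shows $G \in \NN_{\leq 0}$, i.e., $\re G(z)\geq 0$ on $\cp^+$. I would then compute the Nevanlinna kernel
$$K_F(z,w) := i\frac{F(z)+\overline{F(w)}}{z-\bar w}$$
and show it decomposes as $K_F = K_+ + K_-$, where $K_+$ is positive semidefinite (coming from $G$ through a Schur-type identity that pushes the polynomial factor $(z^2+1)^m$ into a multiplicative envelope) and $K_-$ is a Hermitian kernel of finite rank whose signature is at most $m$. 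The contribution of $iQ(z)$ to $K_-$ is a manifestly finite-rank Hermitian form, and the sign constraint $a_{2m+1}\leq 0$ is precisely what bounds the number of negative eigenvalues, placing $F$ in $\NN_{\leq m}\subseteq \NN_{\leq k}$.

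For the necessary direction, given $F\in\NN_{\leq k}$, I would argue inductively on $k$. The base case $k=0$ is \eqref{eq:Nevanlinna_fact_0}. For the inductive step, I would invoke the Krein-Langer reduction: any $F\in\NN_{\leq k}\setminus\NN_{\leq k-1}$ can, after subtracting a suitable polynomial and dividing by $(z-z_0)(z-\bar z_0)$ for some $z_0\in\cp^+$, be converted into an element of $\NN_{\leq k-1}$. Choosing $z_0=i$ at each iteration concentrates all reductive factors at the single point $i$, producing the symmetric polynomial $(z^2+1)^m$ after $m$ steps. Applying the inductive hypothesis (equivalently, the classical representation \eqref{eq:Nevanlinna_fact_0}) to the residual $\NN_{\leq 0}$ function, and then collecting all polynomial remainders accumulated during the iteration into a single $iQ(z)$, yields the stated form.

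The main obstacle I anticipate is the \emph{degree accounting}: verifying that $Q$ has degree at most $2m+1$ and that the leading coefficient satisfies $a_{2m+1}\leq 0$. This requires controlling the asymptotic behavior of $F(iy)/y^{2m+1}$ as $y\to\infty$, combined with the positivity of $\im$ of the auxiliary classical Nevanlinna function that survives the reduction. Most of these bookkeeping steps are already carried out in \cite[Prop.~2.1]{DL}, so the main task is to translate that general (matrix/operator) statement to the scalar holomorphic case and recast it in the specific polynomial-weighted integral representation appearing in \eqref{eq:Nevanlinna_fact_k}.
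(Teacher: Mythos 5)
The paper does not prove Proposition~\ref{prop:neva} at all: immediately before the statement it says ``The next proposition is a restatement of \cite[Prop.~2.1]{DL} for the holomorphic scalar case (see also \cite[eq.~(4)]{DLLS}),'' and no proof is given. So there is no in-paper argument to compare against; the result is used as a black box.

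Your sketch is headed in a sensible direction but has two concrete gaps. For the converse, you assert that the Nevanlinna kernel of $(z^2+1)^m G(z)$ with $G\in\NN_{\leq 0}$ splits as a positive semidefinite part plus a finite-rank part of negativity $\leq m$ ``through a Schur-type identity.'' That decomposition is exactly the nontrivial content of the Krein--Langer theory \cite{KL,DL}; it is not a routine Schur manipulation, and you do not exhibit it. For necessity, you claim the Krein--Langer pole-cancellation can be carried out with $z_0=i$ at every step so that $m$ iterations yield precisely the factor $(z^2+1)^m$. That is not automatic: the generalized poles of an $\NN_{\leq k}$ function are determined by $F$ and need not lie at $i$; the ability to re-center the representation at an arbitrary interior point $iy_0$ is itself a statement requiring proof (the paper addresses exactly this issue after the proposition, in the derivation of \eqref{id:nevfacty0}). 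You then concede that ``most of these bookkeeping steps are already carried out in \cite[Prop.~2.1]{DL},'' at which point your argument reduces to the same citation the paper makes. Given that, the cleanest course is to do what the paper does and present the proposition as a restatement of \cite[Prop.~2.1]{DL} rather than offering a partial re-derivation.
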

We denote the above class by $\NN_{\leq k}$. Moreover, in order to account for the cases in which $\mu$ is a signed measure, we also consider the class $\NN_{\leq k}-\NN_{\leq k}:=\{F-G;F,G\in\NN_{\leq k}\}$. We  can then apply Proposition \ref{prop:neva} to obtain that $F\in \NN_{\leq k}-\NN_{\leq k}$ if and only if
\begin{align}\label{id:nevfact}
F(z)=\frac{(z^2+1)^m}{2\pi i}\int_{\r} \frac{tz+1}{t-z}\cd\frac{d\mu(t)}{(1+t^2)^{m+1}}+iQ(z),
\end{align}
for $m\leq k$, a signed measure $\mu$  with $\deg(\mu)\leq 2(m+1)$, and $Q(z)$, a real polynomial of degree at most $2m+1$. 

We now make some few remarks about the class $\NN_{\leq k}-\NN_{\leq k}$. Firstly, we note that $F\in \NN_0-\NN_0$ if and only if one can write $e^F = P/Q$, where $P$ and $Q$ are holomorphic and bounded by $1$ in $\cp^+$ (this is the Bounded Type class used in \cite{G23}, see also \cite[Thm. 9]{dB}). Secondly, we note that $Q$ is uniquely defined in terms of $F$. Indeed, we have
$$
\Re \frac{F(iy)-iQ(iy)}{(1-y^2)^m y} = \int_\r \frac{1}{y^2+t^2}\frac{\d \mu(t)}{(1+t^2)^m} \to 0
$$
as $y\to \infty$, and since $Q$ has degree at most $2m+1$, we conclude that
$$
\lim_{y\to \infty} y^{-2m-1}\re F(iy)=(-1)^{m+1}a_{2m+1}.
$$
Moreover, since $(F(z)-iQ(z))/(z^2+1)^m$ is holomorphic and $Q$ can only have real coefficients, we must have
$$
Q(z) = -i(z^2+1)^m(R(1/(z-i)) - R^*(1/(z+i)) + a_{2m+1}z(z^2+1)^m
$$
where $R(1/(z-i))$ is the singular part of the Laurent expansion of $F(z)/(z^2+1)^m$ at $z=i$. To see this, first observe there always exist a polynomial $R$ of degree at most $m$ such that we can write $Q$ in above form. If we momentarily let $\text{Sing}[\cdot]$ be the singular part of a given function at $z=i$, we obtain
\begin{align*}
0=\text{Sing}[(F(z)-iQ(z))/(z^2+1)^m] =  \text{Sing}[F(z)/(z^2+1)^m - R(1/(z-i))].
\end{align*}
Thirdly, $\mu$ is also uniquely determined by $F$ since a routine argument shows that
$$
\lim_{s\searrow 0} \Re \int_{a+is}^{b+is} \frac{F(z)-iQ(z)}{(z^2+1)^k}\d z = \int_{a}^b \frac{\tfrac12 d\mu(t)}{(1+t^2)^{k+1}},
$$
whenever $a<b$ are points of continuity of $\mu$. Finally, we note that (and this is going to be extremely useful later on), whenever \eqref{id:nevfact} holds, then for any $y_0>0$ we also have the representation
\begin{align}\label{id:nevfacty0}
F(z)=\frac{(z^2+y_0^2)^m}{2\pi i}\int_{\r} \frac{tz+y_0^2}{t-z}\cd\frac{d\mu(t)}{(y_0^2+t^2)^{m+1}}+iQ_{y_0}(z),
\end{align}
where $Q_{y_0}(z) = (z^2+y_0^2)^m(R_{y_0}(1/(z-iy_0)) + R_{y_0}^*(1/(z+iy_0)) + a_{2m+1}z(z^2+y_0^2)^m$ and $R_{y_0}(1/(z-iy_0)) $ is the singular part of $F(z)/(z^2+y_0^2)^m$ at $z=iy_0$. The form of $Q_{y_0}$ can be derived (as before) by similar considerations from the above integral identity. Hence, to show the above identity holds true, it is enough to prove that if we let $\wt F(z)$ denote the holomorphic function on the right hand side of \eqref{id:nevfacty0} then
$P(z):=i(F(z)-\wt F(z))$ is a real polynomial of degree at most $2m+1$. To this end we make use of the identity
$$
\frac{(z^2+r^2)^m(r^2+tz)}{(r^2+t^2)^{m+1}(t-z)} = \frac{1}{t-z} - \frac{t+z}{r^2+t^2}\sum_{j=0}^{m-1} \bigg(\frac{z^2+r^2}{r^2+t^2}\bigg)^j - \frac{t(r^2+z^2)^m}{(r^2+t^2)^{m+1}}.
$$
Taking the difference of two such identities for $r=1$ and $r=y_0$, we conclude that $P(z)=i(F(z)-\wt F(z))$ extends to an real entire function.

Similarly, $ \lim_{y\to \infty} y^{-2m-1}\wt F(iy)$ also exists. Since both $F$ and $\wt F$ are of Bounded Type\footnote{Functions of bounded type in $\cp^+$ form an algebra that contains polynomials and functions with nonnegative real part, hence $F$ and $\wt F$ are of Bounded Type.} in $\cp^+$, a classical result of Krein \cite[Prob. 37]{dB} shows that $F-\wt F$ must have finite exponential type. However, since $|P(iy)|=O(|y|^{2m+1})$, we conclude that $P$ must be a real polynomial of degree at most $2m+1$.

\section{Preliminaries}

In order to prove the main theorem, we follow an analogous of the strategy used in \cite{G23}. We begin by defining some auxiliary functions.  For $z,w\in\cp^+$, $x\in \r$ and $k\in \z_+$ we define
\begin{align*}
G_0(w,z,x)& :=\frac{e^{-2\pi i\w|x|}\1_{x<0}+e^{2\pi iz|x|}\1_{x\geq0}}{z-\w} \\
A_k(x)&:=\underbracket{e^{-2\pi|\cd|}*\cdots * e^{-2\pi|\cd|}(x)}_{k \text{-times}}  \\
G_k(w,z,x)&:=G_0(w,z,\cd)* A_k(x) \quad (k\geq 1).
\end{align*}

\begin{lemma}[Properties of auxiliary functions]
\label{thm:prop_aux_func}
The above functions have the following properties
\begin{itemize}
\item [(i)] We have 
\begin{equation}
\ft{G_k}(w,z,t)=\frac{1}{2\pi^{k+1}i}\cd\frac{1}{(t-z)(t-\w)}\cd\frac{1}{(1+t^2)^k};
\end{equation}
where the Fourier transform is taken in the last variable.
\item[(ii)] There exist polynomials $r_{k} \in \q[X]$ of degree exactly $k$ such that
\begin{equation}
A_k(x)=e^{-2\pi|x|}\pi^{1-k} r_{k-1}(2\pi |x|).
\end{equation}
These polynomials have the following generating series
\begin{align*}
 \sum_{k\geq 0} q^k r_{k}(X)  =\frac{e^{(1-\sqrt{1-q})X}}{\sqrt{1-q}} = 1
 + \left(\frac{1}{2}\*X
 + 1\right) \*q
 + \left(\frac{1}{8}\*X^2
 + \frac{5}{8}\*X
 + 1\right) \*q^2+ O(q^3),
\end{align*}
which converges absolutely for $|q|<1 $ and $X\in \r$;
\item[(iii)] For $\la\geq0$ we have $G_k(w,z,-\la)=-\ov{G_k(z,w,\la)}$, and if we write $p_{k-1}(x):=\pi^{1-k} r_{k-1}(2\pi x)=\sum_{j=0}^{k-1} b_{k-1,j}x^j$, then
\begin{align*}
(z-\w)G_k(w,z,\la)&=e^{-2\pi\la}\sum_{j=0}^{k-1} \frac{j!b_{k-1,j}}{(2\pi)^{j+1}(1+i\w)^{j+1}}\sum_{l=0}^{j} \frac{(2\pi\la)^l(1+i\w)^l}{l!}\\
&-e^{-2\pi\la}\sum_{j=0}^{k-1} \frac{j!b_{k-1,j}}{(2\pi)^{j+1}(1+iz)^{j+1}}\sum_{l=0}^{j} \frac{(2\pi\la)^l(1+iz)^l}{l!}\\
&+e^{2\pi i\la z}\sum_{j=0}^{k-1} \frac{j!b_{k-1,j}}{(2\pi)^{j+1}}\left[\frac{1}{(1+iz)^{j+1}}+\frac{1}{(1-iz)^{j+1}}\right].
\end{align*}
If $z=i$ one should take the limit in the above expression. Moreover, for fixed $w\in\cp^+$, the map $z\in\cp^+\mapsto G_k(w,z,\la)$ is holomorphic. The same is true in the variable $w$ if we fix $z$.
\item[(iv)] For $z\neq\pm i$:
\begin{equation}
\sum_{j=0}^{k-1} \frac{j!b_{k-1,j}}{(2\pi)^{j+1}}\left[\frac{1}{(1+iz)^{j+1}}+\frac{1}{(1-iz)^{j+1}}\right]=\frac{1}{\pi^k}\cd\frac{1}{(1+z^2)^k}.
\end{equation}
Hence, for $z\neq i$, we can write
\begin{align*}
(z-\w)G_k(w,z,\la)&=e^{-2\pi\la}\sum_{j=0}^{k-1} \frac{j!b_{k-1,j}}{(2\pi)^{j+1}(1+i\w)^{j+1}}\sum_{l=0}^{j} \frac{(2\pi\la)^l(1+i\w)^l}{l!}\\
&-e^{-2\pi\la}\sum_{j=0}^{k-1} \frac{j!b_{k-1,j}}{(2\pi)^{j+1}(1+iz)^{j+1}}\sum_{l=0}^{j} \frac{(2\pi\la)^l(1+iz)^l}{l!}\\
&+e^{2\pi i\la z}\cd\frac{1}{\pi^k}\cd\frac{1}{(1+z^2)^k}.
\end{align*}
\end{itemize}

\end{lemma}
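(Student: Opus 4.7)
The plan is to handle the four parts in sequence, building on earlier items. For (i), since $z\in\cp^+$ and $\w\in\cp^-$, both halves of $G_0(w,z,x)$ are absolutely integrable; evaluating $\int_0^\infty e^{2\pi i(z-t)x}\d x = \frac{1}{2\pi i(t-z)}$ and the analogous integral on $(-\infty,0)$, and then combining with the prefactor $(z-\w)^{-1}$, collapses the result to $\ft{G_0}(w,z,t) = \frac{1}{2\pi i(t-z)(t-\w)}$. Multiplying by $\ft{A_k}(t) = (\pi(1+t^2))^{-k}$, which comes from iterating the standard pair $\ft{e^{-2\pi|\cd|}}(t) = 1/(\pi(1+t^2))$, yields the stated formula for $\ft{G_k}$. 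For (ii), I would pass to the generating function in $q$: on the Fourier side $\sum_{k\geq 1}q^{k-1}\ft{A_k}(t) = 1/(\pi(1+t^2)-q)$, and for $|q|<\pi$ this inverts via the classical identity $\int_\r (t^2+a^2)^{-1}e^{2\pi ixt}\d t = (\pi/a)e^{-2\pi a|x|}$ to $\sum_{k\geq 1}q^{k-1}A_k(x) = (1-q/\pi)^{-1/2}e^{-2\pi|x|\sqrt{1-q/\pi}}$. Dividing by $e^{-2\pi|x|}$ and substituting $X = 2\pi|x|$, $Q = q/\pi$ recovers the stated generating identity; the Taylor coefficients are rational in $X$, and the leading $\frac{X^k(q/2)^k}{k!}$ contribution in $e^{(1-\sqrt{1-q})X}$ shows that $r_k$ has degree exactly $k$.

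For the main identity in (iii), I would expand the convolution $G_k(w,z,\la) = \int_\r G_0(w,z,\la-y) A_k(y)\,\d y$ for $\la \geq 0$ and split $\r$ at $y = 0$ and $y = \la$ so that $\sgn(\la-y)$ and $\sgn(y)$ are constant on each subinterval. Substituting $A_k(y) = e^{-2\pi|y|}p_{k-1}(|y|)$ reduces each sub-integral to a polynomial-times-exponential integral with $\al \in \{2\pi(1\pm iz),\,2\pi(1+i\w)\}$; these evaluate by the elementary identities $\int_0^\infty e^{-\al u}u^j\,\d u = j!/\al^{j+1}$, $\int_0^\la e^{-\al u}u^j\,\d u = \frac{j!}{\al^{j+1}}\bigl(1 - e^{-\al\la}\sum_{l=0}^j (\al\la)^l/l!\bigr)$, and $\int_\la^\infty e^{-\al u}u^j\,\d u = \frac{j!}{\al^{j+1}}e^{-\al\la}\sum_{l=0}^j (\al\la)^l/l!$. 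After multiplying by $z-\w$ and grouping the resulting $e^{-2\pi\la}$- and $e^{2\pi i\la z}$-terms, the three pieces assemble exactly into the three sums in the stated formula.

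For the remaining assertions in (iii) and for (iv): the antipodal identity $G_k(w,z,-\la) = -\ov{G_k(z,w,\la)}$ reduces, by the change of variable $y \to -y$ and evenness of $A_k$, to $G_0(w,z,-x) = -\ov{G_0(z,w,x)}$, which is immediate from the definition by case analysis on $\sgn(x)$; and holomorphy in $z$ (respectively in $\w$) is then manifest from the explicit rational expression derived above. For (iv), I would recompute $\ft{A_k}$ directly from the formula in (ii) by splitting at $x = 0$: $\ft{A_k}(t) = \sum_{j=0}^{k-1}\frac{j!b_{k-1,j}}{(2\pi)^{j+1}}\bigl[(1+it)^{-(j+1)} + (1-it)^{-(j+1)}\bigr]$. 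Equating this with the expression $(\pi(1+t^2))^{-k}$ from (i), and observing that both sides are rational functions of $t$, the identity extends by analytic continuation to all $z\in\cp\setminus\{\pm i\}$, proving (iv) and simplifying the third term in the formula of (iii).

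The main obstacle will be part (iii): the case analysis of the convolution and the careful bookkeeping of signs, indices, and factors of $2\pi$ across the multiple exponential-times-polynomial integrals require attention to ensure the three sums line up precisely as stated. Once the integrals are set up correctly, the remaining items follow by direct computation and analytic continuation.
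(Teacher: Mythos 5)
Your proposal is correct and follows essentially the same path as the paper: the same Fourier-side computation for (i), the same generating-function argument for (ii), the same splitting of the convolution into three intervals with elementary exponential-times-polynomial integrals for (iii), and, for (iv), a direct two-way computation of $\ft{A_k}(t)$ in place of the paper's equivalent use of the auxiliary kernel $H_k(s,0)=\ft{A_k}(-s)$. One small caveat: the explicit rational form of $(z-\w)G_k(w,z,\la)$ has an apparent singularity at $z=i$ (the statement itself says to take a limit there), so holomorphy of $z\mapsto G_k(w,z,\la)$ is not really ``manifest from the explicit rational expression''; it is cleaner, as the paper does, to deduce it from the integral representation via dominated convergence and Morera.
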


\begin{proof}
For item (i), a simple computation yields
\begin{equation}
\ft{G_0}(w,z,t)=\frac{1}{2\pi i(t-z)(t-\w)}.
\end{equation}
Since $\ft{e^{-2\pi|\cd|}}(t)=\frac{1}{\pi(1+t^2)}$, the assertion follows. For item (ii) we have
$$
\sum_{k\geq 1} q^{k-1} \ft{A_k}(\xi) = \sum_{k\geq 1} \frac{q^{k-1}}{\pi^k (1+\xi^2)^k} = \frac{1}{\pi(1+\xi^2)-q} = \frac{1}{\pi(\sqrt{1-q/\pi}^2+\xi^2)}.
$$
By Fourier inversion we obtain
$$
\sum_{k\geq 1} q^{k-1} A_k(|x|) = \frac{e^{-2\pi\sqrt{1-q/\pi}|x| }}{\sqrt{1-q/\pi}}.
$$
Setting $X=2\pi x$ and replacing $q/\pi$ by $q$ we derive the desired assertion.
For item (iii), using that $A_k(x)=e^{-2\pi|x|}p_{k-1}(|x|)$ we have
\begin{align*}
G_k(w,z,\la)&=\left(\int_{-\infty}^{-\la}+\int_{-\la}^{0}+\int_{0}^{\infty}\right)\frac{e^{-2\pi i\w |\la+t|}\1_{(-\infty,0)}(\la+t)+e^{2\pi iz|\la+t|}\1_{[0,\infty)}(\la+t)}{z-\w} A_k(t) dt\\
&=I_1+I_2+I_3.
\end{align*}
A simple computation yields
\begin{align*}
I_1=I_1(w,z)&=\frac{e^{2\pi i\la\w}}{z-\w}\sum_{j=0}^{k-1} (-1)^jb_j^{k-1}\int_{-\infty}^{-\la} t^j e^{2\pi t(1+i\w)}dt\\
&=\frac{e^{-2\pi\la}}{z-\w}\sum_{j=0}^{k-1} \frac{j!b_{k-1,j}}{(2\pi)^{j+1}(1+i\w)^{j+1}}\sum_{l=0}^{j} \frac{(2\pi\la)^l(1+i\w)^l}{l!},
\end{align*}
\begin{align*}
I_2&=I_2(w,z)=\frac{e^{2\pi i\la z}}{z-\w} \sum_{j=0}^{k-1} b_{k-1,j}(-1)^j\int_{-\la}^{0} t^je^{2\pi t(1+iz)}dt\\
&=\frac{1}{z-\w}\left\{-e^{-2\pi\la} \sum_{j=0}^{k-1} \frac{j!b_{k-1,j}}{(2\pi)^{j+1}(1+iz)^{j+1}}\sum_{l=0}^{j} \frac{(2\pi\la)^l(1+iz)^l}{l!}+e^{2\pi i\la z}\sum_{j=0}^{k-1} \frac{j!b_{k-1,j}}{(2\pi)^{j+1}}\cd\frac{1}{(1+iz)^{j+1}}\right\}\\
\end{align*}
and if $z=i$, then 
\begin{equation}
I_2=\frac{e^{-2\pi\la}}{i-\w}\sum_{j=0}^{k-1} \frac{b_{k-1,j}\la^{j+1}}{j+1}.
\end{equation}
Note that $I_2=I_2(w,z)$ is a holomorphic function of $z\in\cp^+$ by dominated convergence and Morera's Theorems. In particular it has no pole at $z=i$. Routine computations also show that
\begin{align*}
I_3=I_3(w,z)&=\frac{e^{2\pi i\la z}}{z-\w} \sum_{j=0}^{k-1} b_{k-1,j}\int_{0}^{\infty} t^je^{2\pi t(iz-1)}dt\\
&=\frac{e^{2\pi i\la z}}{z-\w}\sum_{j=0}^{k-1} \frac{j!b_{k-1,j}}{(2\pi)^{j+1}}\cd\frac{1}{(1-iz)^{j+1}}.
\end{align*}
The result follows. Finally for item (iv), we define
\begin{equation}
H_0(z,x):=e^{-2\pi i\ov{z}|x|}\1_{x<0}+e^{2\pi iz|x|}\1_{x\geq0}, \text{ } (x\in\r)
\end{equation}
and set $H_k(z,x):=H_0(z,\cd)* A_k(x)$ (for $k\geq 1)$. Then, for $\la=0$, $z=s\in\r$, we have
\begin{align*}
H_k(s,0)=\int_{\r} e^{2\pi isx}A_k(x)dx=\ft{A_k}(-s)=\frac{1}{\pi^k}\cd\frac{1}{(1+s^2)^k}.
\end{align*}
On the other hand, using that $A_k(x)=e^{-2\pi|x|}p_{k-1}(|x|)$ and expanding the integral, we obtain
\begin{align*}
H_k(s,0)=\sum_{j=0}^{k-1} \frac{j!b_{k-1,j}}{(2\pi)^{j+1}}\left[\frac{1}{(1+is)^{j+1}}+\frac{1}{(1-is)^{j+1}}\right].
\end{align*}
The claim follows by analytic continuation on $s$.
\end{proof}





We now introduce our main Lemma, which establishes a bridge between the Fourier series and the Nevanlinna factorization of the map $F(z)$ in Theorem \ref{thm:main}. For $k$ an integer, we define the kernel $S_k$:
\begin{equation}
S_k(x):=\frac{1}{v_k}\left(\frac{\sin(\pi x)}{\pi x}\right)^{2(k+1)}
\end{equation}
with $v_k>0$ chosen so that $\ft{S_k}(0)=1$. Note that
\begin{equation}
\ft{S_k}(t) = \frac{1}{v_k} \1_{[-1/2,1/2]}*\cdots*\1_{[-1/2,1/2]}(t);
\end{equation}
the convolution of $2(k+1)$ indicator functions.

\begin{lemma}[The Bridge Lemma]
\label{thm:bridge}
If $(\mu,a)$ is a FS-pair such that $deg(\mu)\leq 2(k+1)$, then
\begin{equation}
\label{eq:bridge}
\lim_{T\ra\infty} \sum_{|\la|\leq T(k+1)} a(\la)G_k(w,z,\la){\ft{S_k}\left(\frac{\la}{T}\right)}=\frac{1}{2\pi^{k+1}i}\int_{\r} \frac{1}{(t-z)(t-\w)}\cd\frac{d\mu(t)}{(1+t^2)^k}.
\end{equation}
uniformly for $z,w \in \cp^+$ in the region
$$R_c:=\{z\in\cp^+;|\Re(z)|\leq1/c \text{ and } \Im(z)\geq c\},$$
for all $c>0$.
\end{lemma}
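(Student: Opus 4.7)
The plan is to apply the FS-pair identity to a suitable truncation of $\la\mapsto G_k(w,z,\la)$. Define
$$
\varphi_T(\la):=G_k(w,z,\la)\,\ft{S_k}(\la/T),
$$
which is compactly supported in $[-T(k+1),T(k+1)]$ since $\ft{S_k}$ is supported on $[-(k+1),k+1]$ (being the convolution of $2(k+1)$ indicators of $[-\tfrac12,\tfrac12]$). Its Fourier transform, by item (i) of Lemma \ref{thm:prop_aux_func} and the convolution theorem, is
$$
\ft{\varphi_T}(t) = \bigl(\Psi * T S_k(T\cd)\bigr)(t), \qquad \Psi(t):=\frac{1}{2\pi^{k+1}i}\cd\frac{1}{(t-z)(t-\ov{w})(1+t^2)^k}.
$$
Since $\varphi_T$ is only finitely smooth, I first mollify: pick $\eta\in C_c^\infty(\r)$ with $\int\eta=1$, set $\eta_\de(x):=\de^{-1}\eta(x/\de)$, and apply \eqref{def:fspair} to the test function $\varphi_T*\eta_\de$. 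Letting $\de\to 0$, the sum side converges to $\sum_{|\la|\leq T(k+1)}a(\la)G_k(w,z,\la)\ft{S_k}(\la/T)$ by local summability of $a$ (the supports stay in a fixed compact set), while the integral side converges to $\int\ft{\varphi_T}\,d\mu$ by dominated convergence, using $|\ft{\eta_\de}(t)|=|\ft\eta(\de t)|\leq 1$ together with the bound $\ft{\varphi_T}\in L^1(|\mu|)$ established below.

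The next step is to let $T\to\infty$. Since $\int S_k=\ft{S_k}(0)=1$ and $S_k\in L^1$, the family $\{TS_k(T\cd)\}_{T\geq 1}$ is a standard approximate identity, so $\ft{\varphi_T}(t)\to \Psi(t)$ pointwise, with convergence uniform on compact $t$-sets and in $(z,w)\in R_c\times R_c$ (since $\Psi$ is uniformly continuous in $t$ on compacts, uniformly in the parameters, by differentiating under the modulus). To interchange the limit with $\int d\mu$, I will prove a uniform dominating estimate
$$
|\ft{\varphi_T}(t)|\leq \frac{C(c)}{(1+t^2)^{k+1}}, \qquad T\geq 1,\ (z,w)\in R_c\times R_c.
$$
This dominator lies in $L^1(|\mu|)$ since $\deg(\mu)\leq 2(k+1)$, and dominated convergence then yields $\int\ft{\varphi_T}\,d\mu\to\int\Psi\,d\mu$, which is the claimed right-hand side. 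Uniformity of the convergence in $R_c\times R_c$ follows by splitting $\r$ into $[-R,R]$ and its complement: the tail is small uniformly by the dominator, and the compact piece is small by the uniform pointwise convergence on compacts.

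The main technical step will be the uniform dominating bound on $\ft{\varphi_T}$. First, from $\Im z,\Im w\geq c$ and $|\re z|,|\re w|\leq 1/c$, considering separately $|t|\leq 2/c$ and $|t|>2/c$ yields $|t-z|,|t-\ov{w}|\gtrsim_c \sqrt{1+t^2}$, whence $|\Psi(s)|\leq C_c/(1+s^2)^{k+1}$ uniformly on $R_c\times R_c$. Second, the specific shape $S_k(x)=v_k^{-1}(\sin\pi x/\pi x)^{2(k+1)}$ gives $S_k(x)\lesssim(1+x^2)^{-(k+1)}$, so $TS_k(Tu)\lesssim |u|^{-2(k+1)}$ for $|u|\geq 1$ and $T\geq 1$. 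A routine splitting of the convolution $\int \Psi(t-u)\,TS_k(Tu)\,du$ into $|u|\leq 1$ and $|u|>1$ (and in the latter further splitting by whether $|u|\leq|t|/2$ or $|u|>|t|/2$), combined with these decay estimates, produces the desired polynomial decay $(1+t^2)^{-(k+1)}$ in each piece. The matching of exponents---$\deg(\mu)\leq 2(k+1)$ on one side, power $2(k+1)$ in $S_k$ on the other, and the extra $(1+t^2)^k$ factor appearing in $\Psi$---is precisely what makes the argument close, showing that $S_k$ is the tailored kernel extending the approach of \cite{G23} (where only $k=0$ appeared) to arbitrary index $k$.
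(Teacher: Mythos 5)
Your proof is correct and follows the same overall strategy as the paper's: mollify the truncated function $G_k(w,z,\cd)\ft{S_k}(\cd/T)$ to produce a genuine test function, apply the FS-pair identity, and pass to the limits $\de\to 0$ and then $T\to\infty$ by dominated convergence against a dominator of the form $C_c(1+t^2)^{-(k+1)}$, which lies in $L^1(|\mu|)$ precisely because $\deg(\mu)\leq 2(k+1)$. The one place you genuinely diverge from the paper is the proof of the uniform bound $|\ft{G_k}(w,z,\cd)*S_{k,T}(t)|\lesssim_c (1+t^2)^{-(k+1)}$ for $z,w\in R_c$, $T\gg_c 1$ (the paper's inequality \eqref{eq:unif_estimate_1}): the paper writes the convolution integral out explicitly, shifts the contour by $ic/2$, and invokes the residue theorem to extract the decay, whereas you estimate the convolution directly by splitting the $u$-integration into scales---for $|u|\le |t|/2$ one has $|\Psi(t-u)|\lesssim_c(1+t^2)^{-(k+1)}$ while $\int TS_k(Tu)\,du=1$, and for $|u|>|t|/2$ the kernel tail $TS_k(Tu)\le v_k^{-1}T^{-2k-1}(\pi u)^{-2(k+1)}\lesssim |u|^{-2(k+1)}$ (for $T\ge 1$) supplies the decay after using $\Psi\in L^1(\r)$. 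Your real-variable splitting is more elementary and sidesteps the contour shift entirely; both arguments establish exactly the bound needed, and the exponent bookkeeping ($2(k+1)$ on the $\mu$-side matching the power of $\sin$ in $S_k$ and the extra $(1+t^2)^k$ built into $\ft{G_k}$) closes identically. One small point worth making explicit in your write-up: choose the mollifier $\eta\ge 0$ (as the paper chooses $\p\ge 0$) so that $\|\eta\|_1=\int\eta=1$ and the bound $|\ft\eta(\de t)|\le 1$ actually holds.
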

\begin{proof}
Fix a $c>0$ small and write $\S:=S_k$. Observe that $\ft{\S}$ has compact support with $\supp(\ft{\S})\subset[-k-1,k+1]$ and $\ft{\S}(0)=1$. For $T>0$, let 
$\S_T(x)=T\S(Tx)$
which is an approximation of identity as $T\ra\infty$, and $\ft{\S_T}(t)=\ft{\S}(t/T)$. Take a function $\p\in C^\infty_c(\r)$, with $\p\geq0$, $\supp(\p)\subset[-1,1]$ and $\ft{\p}(0)=1$. For $0<\ep<1$, let $\p_\ep(x)=\p(x/\ep)/\ep$. Then, for fixed $T>4/c$, we define

\begin{equation}
G_{\ep,T}(x)=\left(G_k(w,z,\cd)\ft{\S_T}\right)*\p_\ep(x)
\end{equation}
which belongs to $C^\infty_c(-T(k+1)-1,T(k+1)+1)$. Also,

\begin{equation}
\ft{G_{\ep,T}}(t)=\left(\ft{G_k}(w,z,\cd)* \S_T\right)(t)\ft{\p}(\ep t),
\end{equation}
which converges pointwisely to $\ft{G_k}(w,z,\cd)* \S_T(t)$ when $\ep\downarrow0$.
We also claim that 

\begin{equation}
\label{eq:unif_estimate_1}
\left|\ft{G_k}(w,z,\cd)* \S_T(t)\right|\ll_c\frac{1}{(t^2+c^2)^{k+1}}
\end{equation}
for $z,w\in R_c$, which we prove in the end. Since $(\mu,a)$ is an FS-pair, we have

\begin{equation}
\sum_{|\la|\leq T(k+1)+1} a(\la)G_{\ep,T}(\la)=\int_{\r} \left(\ft{G_k}(w,z,\cd)* \S_T\right)(t)\ft{\p}(\ep t) d\mu(t).
\end{equation}
Because $a(\cd)$ is locally summable, $G_{\ep,T}\ra G_k(w,z,\cd)\ft{\S_T}$ uniformly in $\r$, $\ft{G_{\ep,T}}(t)\ra\ft{G_k}(w,z,\cd)* \S_T(t)$ pointwisely as $\ep\downarrow0$, $||\ft{\p}||_\infty\leq1$ and inequality \eqref{eq:unif_estimate_1}, using the Dominated Convergence Theorem we obtain that
\begin{equation}
\label{eq:intermediate_bridge}
\sum_{|\la|\leq T(k+1)} a(\la)G_k(w,z,\la)\ft{\S}\left(\frac{\la}{T}\right)=\int_{\r} \ft{G_k}(w,z,\cd)*\S_T(t)d\mu(t)
\end{equation}
holds for any $w,z\in R_c$. Note that, for each fixed $c>0$, $T>4/c$ and $w\in\cp^+$, the left-hand side above defines a holomorphic function in $z\in\cp^+$.

We now  show that
\begin{equation}
\label{eq:estimate_ft_G_k}
|\ft{G_k}(w,z,t)|\ll_c\frac{1}{(t^2+c^2)^{k+1}}.
\end{equation}
Indeed, we have
\begin{align*}
|\ft{G_k}(w,z,t)|\ll_c\frac{1}{|t-z|\cd|t-\w|}\cd\frac{1}{(1+t^2)^k}\ll_c\frac{1}{t^2+c^2}\cd\frac{1}{(1+t^2)^k}.
\end{align*}
The family of maps $\ft{G_k}(w,z,\cd)$ for $z,w\in R_c$ is uniformly continuous, uniformly on $z,w$, because each function is Lipschitz with constant independent of $z,w$. Indeed, for $t_1,t_2\in\r$, we have
\begin{align*}
|\ft{G_k}(w,z,t_1)-\ft{G_k}(w,z,t_2)|&\ll\left|\frac{\ft{G_0}(w,z,t_1)}{(1+t_1^2)^k}-\frac{\ft{G_0}(w,z,t_2)}{(1+t_2^2)^k}\right|\\
&\leq\left|\frac{\ft{G_0}(w,z,t_1)}{(1+t_1^2)^k}-\frac{\ft{G_0}(w,z,t_1)}{(1+t_2^2)^k}\right|+\left|\frac{\ft{G_0}(w,z,t_1)}{(1+t_2^2)^k}-\frac{\ft{G_0}(w,z,t_2)}{(1+t_2^2)^k}\right|\\
&\leq\left|\ft{G_0}(w,z,t_1)\right|\left|\frac{1}{(1+t_1^2)^k}-\frac{1}{(1+t_2^2)^k}\right|+\frac{\left|\ft{G_0}(w,z,t_1)-\ft{G_0}(w,z,t_2)\right|}{(1+t_2^2)^k}\\
&\ll \frac{1}{c^2}k|t_1-t_2|+\frac{|t_1-t_2|}{4\pi c^3}\ll_c|t_1-t_2|,
\end{align*}
because
\begin{equation}
|\ft{G_0}(w,z,t_1)-\ft{G_0}(w,z,t_2)|=\frac{1}{2\pi|z-\w|}\left|\frac{1}{t_1-z}-\frac{1}{t_1-\w}-\frac{1}{t_2-z}+\frac{1}{t_2-\w}\right|\leq\frac{|t_1-t_2|}{4\pi c^3}.
\end{equation}
Since $\ft{G_k}(w,z,\cd)$ is uniformly continuous, uniformly for $z,w\in R_c$, we deduce that $\ft{G_k}(w,z,\cd)*\S_T\to \ft{G_k}(w,z,\cd)$ as $T\ra\infty$, uniformly for $z,w\in R_c$. Combining \eqref{eq:intermediate_bridge}, \eqref{eq:unif_estimate_1} and \eqref{eq:estimate_ft_G_k}, and using the Dominated Convergence Theorem, plus the fact that the measure $\mu$ is locally finite, we finally deduce that
\begin{equation}
\lim_{T\ra\infty} \sum_{|\la|\leq T(k+1)} a(\la)G_k(w,z,\la)\ft{\S}\left(\frac{\la}{T}\right)=\frac{1}{2\pi^{k+1}i}\int_{\r} \frac{1}{(t-z)(t-\w)}\cd\frac{d\mu(t)}{(1+t^2)^k},
\end{equation}
uniformly in $z,w\in R_c$, as desired. 

All that remains to be proven is \eqref{eq:unif_estimate_1}. Indeed, for $T>4/c$ and $z,w\in R_c$, we have
\begin{align*}
&\ft{G_k}(w,z,\cd)* \S_T(t)=\frac{1}{2\pi^{k+1}iT^{2k+1}}\int_{\r} \frac{\sin^{2(k+1)}(T\pi s)}{[(t-s)-z][(t-s)-\w][1+(t-s)^2]^k(\pi s)^{2(k+1)}}ds\\
&=\frac{1}{2\pi^{k+1}iT^{2k+1}}\int_{\r} \frac{\sin^k(T\pi(s+i/T))}{[(t-s-i/T)-z][(t-s-i/T)-\w][1+(t-s-i/T)^2]^k[\pi(s+i/T)]^{2(k+1)}}ds
\end{align*}
hence
\begin{align*}
\left|\ft{G_k}(w,z,\cd)* \S_T(t)\right|\ll_c \frac{1}{T^{2k+1}} \int_{\r} \frac{1}{[(t-s)^2+c^2]^{k+1}(s^2+1/T^2)^{k+1}}ds.
\end{align*}
We now use the Residue Theorem to change the contour of integration to $s+ic/2$. Indeed, the map
\begin{align*}
f(s):=\frac{1}{[(t-s)^2+c^2]^{k+1}(s^2+1/T^2)^{k+1}}
\end{align*}
has a pole of order $k+1$ at $s=i/T$, and the residue of $f(s)$ is given by
\begin{align*}
\res{i/T} f(s)=\sum_{\substack{1\leq j,m,n\leq k+1 \\ k+l-j\geq k+1}} \ga_{j,m,n} \frac{\left(t-i/T\right)^j}{T^{2k+1}[(t-i/T)^2+c^2]^{k+l}(2i/T)^{k+m}},
\end{align*}
for some coefficients $\ga_{j,m,n}$, from which we derive
\begin{align*}
\left|\res{i/T} f(s)\right|\ll_c \frac{1}{(t^2+\eta_1 c^2)^{k+1}},
\end{align*}
for some $\eta_1>0$.
On the other hand, the integral over the line $s+ic/2$ gives
\begin{align*}
&\left|\int_{\r} \frac{1}{[(t-s-ic/2)^2+c^2]^{k+1}[(s+ic/2)^2+1/T^2]^{k+1}}ds\right|\\
&\leq\int_{\r} \frac{1}{T^{2k+1}[(t-s)^2+3c^2/4]^{k+1}(s^2+3c^2/16)^{k+1}}\ll_c \frac{1}{(t^2+\eta_2 c^2)^{k+1}},
\end{align*}
where we used the elementary fact about convolutions that if $|f_1(x)|,|f_2(x)|\leq\frac{C}{(1+|x|^2)^k}$ for all $|x|\geq R_1$, then $|f_1*f_2(x)|\leq\frac{C}{(1+|x|^2)^k}$ for $|x|\geq R_2$. The proof is complete.
\end{proof}

\section{Proof of Theorems \ref{thm:main} and \ref{thm:converse_main}}
\begin{proof}[Proof of Theorem \ref{thm:main}: {Necessity}]
Since $a(\cd)$ has exponential growth, there exists $\al>0$ such that $\sum_{\la\in\r} |a(\la)|e^{-2\pi\al|\la|}<\infty$. We claim we can assume that $\al<1$. Indeed, suppose the necessity part of Theorem \ref{thm:main} is proven for $\al<1$. Now, given an arbitrary FS-pair $(\mu,a)$ consider the pair $(y_0^{-1}\mu(y_0\cd),a(\cd/y_0))$ for $y_0>0$ sufficiently large so that $\sum_{\la\in\r} |a(\la/y_0 )|e^{-2\pi\tfrac12|\la|}=\sum_{\la\in\r} |a(\la )|e^{-\pi y_0|\la|}<\infty$. By hypothesis the function
$$
\wt F(z) = \frac12 a(0) + \sum_{\la>0} a(\la/y_0)e^{2\pi i \la z} =  \frac{(z^2+1)^k}{2\pi i}\int_{\r} \frac{tz+1}{t-z}\cd\frac{y_0^{-1}d\mu(y_0t)}{(1+t^2)^{k+1}}+iQ(z),
$$
is a well-defined holomorphic function in $\cp^+$ that belongs to $\NN_{\leq k}-\NN_{\leq k}$, where the second identity above is valid for $\Im z>0$ while the first is valid for  $\Im z>1$. Moreover, $\wt F \in \A\P(\cp^++i)$. By the remarks in the end of Section \ref{sec:neva} (see also \eqref{id:nevfacty0}), it also holds that
$$
\wt F(z) = \frac12 a(0) + \sum_{\la>0} a(\la/y_0)e^{2\pi i \la z} =  \frac{(z^2+y_0^{-2})^k}{2\pi i}\int_{\r} \frac{tz+y_0^{-2}}{t-z}\cd\frac{y_0^{-1}d\mu(y_0t)}{(y_0^{-2}+t^2)^{k+1}}+iQ_1(z),
$$
for some real polynomial $Q_1$ of degree at most $2k+1$. We then let $F(z)=\wt F(z/y_0)$ and observe that $F \in \A\P(\cp^+ +  iy_0)$ and that
$$
F(z) = \frac12 a(0) + \sum_{\la>0} a(\la )e^{2\pi i \la z} = \frac{(z^2+1)^k}{2\pi i}\int_{\r} \frac{tz+1}{t-z}\cd\frac{d\mu(t)}{(1+t^2)^{k+1}}+iQ_1(z/y_0).
$$
The result would then follow.

Therefore, it suffices to consider the case $\sum_{\la\in\r} |a(\la)|e^{-\pi|\la|}<\infty$. We begin by applying the Bridge Lemma \ref{thm:bridge} and multiplying both sides by $(z-\w)$ to get
\begin{equation}
\label{eq:bridge_1}
\lim_{T\ra\infty} \sum_{\la\in\r} a(\la)(z-\w)G_k(w,z,\la)\ft{S_k}\left(\frac{\la}{T}\right)=\frac{1}{2\pi^{k+1}i}\int_{\r} \frac{z-\w}{(t-z)(t-\w)}\cd\frac{d\mu(t)}{(1+t^2)^k},
\end{equation}
which converges uniformly for $z,w$ in compact sets of $\cp^+$. The right-hand side can be written as $H(z)+\ov{H(w)}$, where
\begin{equation}
H(z):=\frac{1}{2\pi^{k+1}i}\int_{\r} \frac{tz+1}{t-z}\cd\frac{d\mu(t)}{(1+t^2)^{k+1}}
\end{equation}
is a holomorphic function in $\cp^+$. Now, we use the explicit form of the map $G_k$, split the limit into the sum and isolate the $z-$terms from the $w-$terms: For $z,w\in\cp^+,z,w\neq i$, by Lemma \eqref{thm:prop_aux_func} (iii) and (iv), the left-hand side of \eqref{eq:bridge_1} can be written as (by simplicity, we write $b_j=b_{k-1,j}$)
\begin{align}
\label{eq:bridge_2}
&F(z)\cd\frac{1}{\pi^k}\cd\frac{1}{(1+z^2)^k}+\ov{F(w)}\cd\ov{\frac{1}{\pi^k}\cd\frac{1}{(1+w^2)^k}}\\
&+\sum_{j=0}^{k-1} \frac{j!b_j}{(2\pi)^{j+1}} \sum_{l=0}^{j} \frac{1}{(1-iz)^{j+1-l}}\ov{\lim_{T\ra\infty} \ga_l(T)}-\sum_{j=0}^{k-1} \frac{j!b_j}{(2\pi)^{j+1}} \sum_{l=0}^{j} \frac{1}{(1+iz)^{j+1-l}}\lim_{T\ra\infty}  \ga_l(T)\\
&+\sum_{j=0}^{k-1} \frac{j!b_j}{(2\pi)^{j+1}} \sum_{l=0}^{j} \frac{1}{(1+i\w)^{j+1-l}}\lim_{T\ra\infty}  \ga_l(T)-\sum_{j=0}^{k-1} \frac{j!b_j}{(2\pi)^{j+1}} \sum_{l=0}^{j} \frac{1}{(1-i\w)^{j+1-l}}\ov{\lim_{T\ra\infty}  \ga_l(T)},
\end{align}
where
\begin{equation}
\label{eq:convergence_series_F(z)}
F(z):=\frac{1}{2}a(0)+\lim_{T\ra\infty} \sum_{\la>0} a(\la)\ft{S_k}\left(\frac{\la}{T}\right)e^{2\pi i\la z},
\end{equation}
which converges uniformly in compact subsets of $\cp^+$. Note that the definition of $F(z)$ does not depend on $y_0$. Moreover,
\begin{align*}
&\ga_0(T):=\frac{1}{2}a(0)+\sum_{\la>0} a(\la)\ft{S_k}\left(\frac{\la}{T}\right)e^{-2\pi \la},\\
&\ga_l(T):=\frac{1}{l!}\sum_{\la>0} a(\la)\ft{S_k}\left(\frac{\la}{T}\right)(2\pi\la)^le^{-2\pi \la}, \text{ for } l\geq1.
\end{align*}
and each of the limits $\lim_{T\ra\infty} \ga_l(T)$ for $l\geq0$, does exist and 
\begin{align}
\label{eq:existence_of_limits}
\lim_{T\ra\infty} \ga_0(T)=\frac{1}{2}a(0)+\sum_{\la>0} a(\la)e^{-2\pi \la}, \text{ and }\\
\lim_{T\ra\infty} \ga_l(T)=\frac{1}{l!}\sum_{\la>0} a(\la)(2\pi\la)^l e^{-2\pi \la}, \text{ for } l\geq1.
\end{align}
Furthermore, since
\begin{align}\label{eq:fourier_series_F}
F(z)=\frac{1}{2}a(0)+\sum_{\la>0} a(\la)e^{2\pi i\la z}
\end{align}
converges uniformly and absolutely for $\Im(z)>1/2$, we can then interchange summation and differentiation:
\begin{equation}
F^{(k)}(z)=\sum_{\la>0} a(\la)(2\pi i\la)^k e^{2\pi i\la z}, \text{ for } k\geq1,
\end{equation}
hence, $\lim_{T\ra\infty} \ga_l(T)=\frac{F^{(l)}(i)}{l!i^l} \text{ for } l\geq0.$ Then we can write \eqref{eq:bridge_2} as $R(z)+\ov{R(w)}$ where
\begin{align*}
&R(z):=\frac{F(z)}{\pi^{k}(1+z^2)^k}+\sum_{j=0}^{k-1} \frac{j!b_j}{(2\pi)^{j+1}} \sum_{l=0}^{j} \frac{1}{(1-iz)^{j+1-l}}\ov{\left(\frac{F^{(l)}(i)}{l!i^l}\right)}\\
&-\sum_{j=0}^{k-1} \frac{j!b_j}{(2\pi)^{j+1}} \sum_{l=0}^{j} \frac{1}{(1+iz)^{j+1-l}}\frac{F^{(l)}(i)}{l!i^l}
\end{align*}
which is holomorphic in $\cp^+\backslash\{i\}$. Therefore, we obtain
\begin{equation}
R(z)+\ov{R(w)}=H(z)+\ov{H(w)}
\end{equation}
for $z,w\in\cp^+\backslash\{i\}$. By analytic continuation, we conclude there exists some $h\in\r$ such that $R(z)=ih+H(z)$. Multiplying both sides by $(z^2+1)^k$ and rearranging terms, we get
\begin{equation}
F(z)\frac{1}{\pi^k}=(z^2+1)^k H(z)+\frac{i}{\pi^k}Q(z)
\end{equation}
where\footnote{We recall the notation $F^*(z)=\ov{F(\ov{z})}$.}
\begin{align*}
&\frac{i}{\pi^k}Q(z):=ih(z^2+1)^k+\sum_{l=0}^{k-1} Q_l(z)-Q_l^*(z)\\
&Q_l(z)=(z^2+1)^k\frac{F^{(l)}(i)}{l!i^l}\sum_{j=l}^{k-1} \frac{j!b_j}{(2\pi)^{j+1}}\cd\frac{1}{(1+iz)^{j+1-l}}.
\end{align*}
Observe that $Q(z)$ is a real polynomial and $Q(z)={h\pi^k} z^{2k}+r(z)$, where $r(z)$ is a real polynomial of degree $\leq 2k-1$. We obtain
\begin{align*}
F(z)=\frac{(z^2+1)^k}{2\pi i}\int_{\r} \frac{tz+1}{t-z}\cd\frac{d\mu(t)}{(1+t^2)^{k+1}}+iQ(z),
\end{align*}
which belongs to the class $\NN_{\leq k}-\NN_{\leq k}$.

Now, from \eqref{eq:fourier_series_F}, we conclude that $F(\cd+i)$ is the uniform limit of trigonometric polynomials. Hence, by \cite[Lemma 8]{G23} $F(\cd+i)\in\A\P(\r)$. Since $F(z)$ is holomorphic and bounded in $\cp^++i$, it follows from \cite[Lemma 11]{G23} that $F(\cd+i)\in\A\P(\cp^+)$, and with Fourier coefficients given by $a(\la)$. Take $c=1$ and (II) is proved. Finally, (III) follows from the finite exponential growth assumption on $a(\cd)$.

\end{proof}

We now prove sufficiency by proving Theorem \ref{thm:converse_main}.

\begin{proof}[Proof of Theorem \ref{thm:converse_main}] Since $F(\cd+ic)\in\A\P(\cp^+)$, it follows from \cite[Lemma 11]{G23} that the limit
\begin{equation}
\e F(\la):=\lim_{T\ra\infty} \frac{1}{2T}\int_{-T+iy}^{T+iy} F(z)e^{-2\pi i\la z}dz
\end{equation}
exist for all $\la\in\r$ and does not depend on $y>c$. Moreover, it holds
\begin{equation}
\sum_{\la\in\r} |\e F(\la)|^2e^{-4\pi\la y}=\e\left[|F(\cd+iy)|^2\right]<\infty
\end{equation}
for any $y>c$.
We now claim that $F(\cd+2ic)$ is bounded. Hence, by \cite[Lemma 12(i)]{G23}, it will follow that $\e F(\la)=0$ for any $\la<0$. In order to prove it is bounded, we will employ the Phragmén-Lindelöf Theorem (as in \cite[Thm 1]{dB}). Let $G(z):=F(z+2ic)$. From Lemma \cite[Lemma 11]{G23}, the map $x\in\r\mapsto G(x)\in\A\P(\r)$, hence it is bounded in $\r$. What remais to prove is that 
\begin{equation}
\label{eq:liminf_Phragmen_Lindelof}
\liminf_{r\ra\infty} \frac{1}{r}\int_{0}^{\pi} \log^+|G(re^{i\theta})|\sin(\theta)d\theta=0,
\end{equation}
where $\log^+(x):=\max\{\log(x),0\}$.
Indeed, since $F(z)$ belongs to the class $\NN_{\leq k}-\NN_{\leq k}$, it admits a Nevanlinna factorization: There exists a unique real signed measure $\mu$ with $\deg(\mu)=2(m+1)$, $m\leq k$, and a real polynomial $Q(z)$ of degree at most $2m+1$, such that
\begin{equation}
F(z)=\frac{(z^2+1)^m}{2\pi i} \int_{\r} \frac{1+tz}{t-z}\cd\frac{d\mu(t)}{(1+t^2)^{m+1}}+iQ(z)
\end{equation}
for $z\in\cp^+$. We can assume, without lost of generality, that $m=k$, otherwise by Proposition \ref{prop:neva} we have $F\in \NN_{\leq m}-\NN_{\leq m}$, and the proof would follow from induction.
Observe that, by splitting in $|t|\leq2|z|$ and $|t|>2|z|$, we obtain $\left|\frac{1+tz}{t-z}\right|\leq\frac{1+2|z|^2}{y}$. Hence $G(re^{i\theta})=O(r^{2k+2})$, and condition \eqref{eq:liminf_Phragmen_Lindelof} follows. 

It remains to prove that $\sum a(\la)\varphi(\la)=\int \ft{\varphi}(t)d\mu(t)$ for any test function $\varphi$. Note that, by linearity, it is enough to show this identity only for antipodal test functions $\p(-x)=\widebar{\p(x)}$, as any test function $\p$ can be written as $\p=u+iv$, where $u$ and $v$ are antipodal. To this end, let
\begin{equation}
H(z):=F(z)-iQ(z)=\frac{1}{2\pi i} \int_{\r} (z^2+1)^k\frac{1+tz}{t-z}\cd\frac{d\mu(t)}{(1+t^2)^{k+1}}.
\end{equation}
On one hand, for $z\in\cp^+$ and $s\in\r$, we have
\begin{align}
\label{eq:identity_H}
H(z+s)+\ov{H(-\ov{z}+s)}&= \int_{\r} P_z(t-s)(1+s^2)^k(1+t^2)\frac{d\mu(t)}{(1+t^2)^{k+1}}\\
&+2k \int_{\r} P_z(t-s)s(1+s^2)^{k-1}(st-s^2+t^2s^2-ts^3)\frac{d\mu(t)}{(1+t^2)^{k+1}}\\
&+\frac{1}{2} \int_{\r} P_z(t-s)h(z,s,t)\frac{d\mu(t)}{(1+t^2)^{k+1}},
\end{align}
where $h(z,s,t)$ is a polynomial in the variables $z,s,t$ with real coefficients such that there is no constant term in $z$, the degree in $t$ is at most 2, and
\begin{equation}
P_z(t):=\frac{z}{\pi i(t^2-z^2)}
\end{equation}
is the Poisson kernel.
Let $\varphi\in\C^\infty_c(-M,M)$ be antipodal, hence $\ft{\varphi}$ is real-valued. From \eqref{eq:identity_H} we obtain
\begin{align}
\label{eq:identity_H_phi}
&\int_{\r} \left[H(z+s)+\ov{H(-\ov{z}+s)}\right]\ft{\varphi}(s)ds=\\
&= \int_{\r} \int_{\r} P_z(t-s)(1+s^2)^k\ft{\varphi}(s)ds(1+t^2)\frac{d\mu(t)}{(1+t^2)^{k+1}}\\
&+2k \int_{\r} \int_{\r} P_z(t-s)s(1+s^2)^{k-1}(st-s^2+t^2s^2-ts^3)\ft{\varphi}(s)ds\frac{d\mu(t)}{(1+t^2)^{k+1}}\\
&+\frac{1}{2} \int_{\r} \int_{\r} P_z(t-s)h(z,s,t)\ft{\varphi}(s)ds\frac{d\mu(t)}{(1+t^2)^{k+1}}
\end{align}
Taking $z=iy$ and using the fact that $P_{iy}$ is an approximation of identity when $y\downarrow0$, from the right-hand side of \eqref{eq:identity_H_phi} and the Dominated Convergence Theorem we obtain
\begin{align}
& \int_{\r} (1+t^2)^{k+1}\ft{\varphi}(t)\frac{d\mu(t)}{(1+t^2)^{k+1}}+2k \int_{\r} t(1+t^2)^{k-1}(t^2-t^2+t^4-t^4)\ft{\varphi}(t)ds\frac{d\mu(t)}{(1+t^2)^{k+1}}\\
&=\int_{\r} \ft{\varphi}(t)d\mu(t).
\end{align}
On the other hand, using that $H(z)=F(z)-iQ(z)$, we can write the left-hand side of \eqref{eq:identity_H_phi} in a different way. Fix an $\Im(z)>c$ and write $z=x+i(c+\eta)$. Since the map $s\mapsto F(s+i(c+\eta))\in\A\P(\r)$, by Bochner's Approximation (see \cite[Proposition 7]{G23}), it can be approximated by a sequence of trigonometric polynomials
\begin{equation}
g_n(s)=\sum_{\la\geq0} \e F(\la)e^{2\pi i\la i(c+\eta)}d_n(\la)e^{2\pi i\la s}
\end{equation}
such that $||g_n-F(\cd+i(c+\eta))||_\infty\ra 0$ as $n\ra\infty$. Here $d_n:\r\ra[0,1]$ is a sequence of functions, each one with finite support and such that
\begin{equation}
\lim_{n\ra\infty} d_n(\la)=\begin{cases}
1 \text{ if } \e F(\la)\neq0\\
0 \text{ if } \e F(\la)=0. \end{cases}
\end{equation}  
Since $Q(z)$ is a real polynomial of degree at most $2k+1$ we can write
\begin{equation}
Q(z+s)=\sum_{l,j=0}^{2k+1} \ga_{l,j} z^l s^j
\end{equation}
where the coefficients $\ga_{l,j}$ are real. Then
\begin{align}
\int_{\r} \left[g_n(x+s)-iQ(z+s)\right]\ft{\varphi}(s)ds&=\sum_{0\leq\la\leq M} \e F(\la)e^{2\pi i\la z}d_n(\la)\varphi(\la)\\
&-i\sum_{l,j=0}^{2k+1} \ga_{l,j} z^l\frac{\varphi^{(j)}(0)}{(2\pi i)^j}.
\end{align}
Since the map $\la\mapsto\e F(\la)$ is locally summable, sending $n\ra\infty$ and using the Dominated Convergence Theorem, it follows that
\begin{align}
\int_{\r} H(z+s)\ft{\varphi}(s)ds=\sum_{0\leq\la\leq M} \e F(\la)e^{2\pi i\la z}\varphi(\la)-i\sum_{l,j=0}^{2k+1} \ga_{l,j} z^l\frac{\varphi^{(j)}(0)}{(2\pi i)^j},
\end{align}
which holds for $\Im(z)>c$.
Now, replacing $z$ by $-\ov{z}$ is the above computation and since $\ft{\varphi}$ is real-valued, we derive
\begin{align}
\int_{\r} \ov{H(-\ov{z}+s)}\ft{\varphi}(s)ds=\sum_{0\leq\la\leq M} \ov{\e F(\la)}e^{2\pi i\la z}\ov{\varphi(\la)}+i\sum_{l,j=0}^{2k+1} \ga_{l,j} (-z)^l\frac{\ov{\varphi^{(j)}(0)}}{(-1)^j(2\pi i)^j}.
\end{align}
Hence, for $\Im(z)>c$, we obtain
\begin{align}
&\int_{\r} \left[H(z+s)+\ov{H(-\ov{z}+s)}\right]\ft{\varphi}(s)ds=\sum_{0\leq\la\leq M} \e F(\la)e^{2\pi i\la z}\varphi(\la)-i\sum_{l,j=0}^{2k+1} \ga_{l,j} z^l\frac{\varphi^{(j)}(0)}{(2\pi i)^j}\\
&+\sum_{0\leq\la\leq M} \ov{\e F(\la)}e^{2\pi i\la z}\ov{\varphi(\la)}+i\sum_{l,j=0}^{2k+1} \ga_{l,j} (-z)^l\frac{\ov{\varphi^{(j)}(0)}}{(-1)^j(2\pi i)^j}.
\end{align}
Since $\varphi$ is antipodal, then $\varphi^{(j)}(-\la)=(-1)^j\ov{\varphi^{(j)}(\la)}$.
From \eqref{eq:identity_H_phi} we obtain
\begin{align}
&\sum_{0\leq\la\leq M} \e F(\la)e^{2\pi i\la z}\varphi(\la)-i\sum_{l,j=0}^{2k+1} \ga_{l,j} z^l\frac{\varphi^{(j)}(0)}{(2\pi i)^j}\\
&+\sum_{-M\leq\la\leq 0} \ov{\e F(-\la)}e^{2\pi i|\la| z}\varphi(\la)+i\sum_{l,j=0}^{2k+1} \ga_{l,j} (-z)^l\frac{\varphi^{(j)}(0)}{(2\pi i)^j}=\\
&= \int_{\r} \int_{\r} P_z(t-s)(1+s^2)^k\ft{\varphi}(s)ds(1+t^2)\frac{d\mu(t)}{(1+t^2)^{k+1}}\\
&+2k \int_{\r} \int_{\r} P_z(t-s)s(1+s^2)^{k-1}(st-s^2+t^2s^2-ts^3)\ft{\varphi}(s)ds\frac{d\mu(t)}{(1+t^2)^{k+1}}\\
&+\frac{1}{2} \int_{\r} \int_{\r} P_z(t-s)h(z,s,t)\ft{\varphi}(s)ds\frac{d\mu(t)}{(1+t^2)^{k+1}}
\end{align}
holds for $\Im(z)>c$. By analytic continuation, equality holds for any $z\in\cp^+$. Moreover, by antipodal splitting, the above equality holds for any $\varphi\in\C^\infty_c(-M,M)$ for arbitrary $M>0$. Taking $z=iy$ and letting $y\downarrow0$, we obtain
\begin{align}
&\sum_{0\leq\la\leq M} \e F(\la)\varphi(\la)-i\sum_{j=0}^{2k+1} \ga_{0,j} \frac{\varphi^{(j)}(0)}{(2\pi i)^j}\\
&+\sum_{-M\leq\la\leq 0} \ov{\e F(-\la)}\varphi(\la)+i\sum_{j=0}^{2k+1} \ga_{0,j} \frac{\varphi^{(j)}(0)}{(2\pi i)^j}=\int_{\r} \ft{\varphi}(t)d\mu(t)
\end{align}
hence
\begin{align}
\e F(0)\varphi(0)+\sum_{0<\la\leq M} a(\la)\varphi(\la)+\ov{\e F(0)}\varphi(0)+ \sum_{-M\leq\la<0} a(\la)\varphi(\la)=\int_{\r} \ft{\varphi}(t)d\mu(t)
\end{align}
or better
\begin{align}
\sum_{\la\in\r} a(\la)\varphi(\la)=\int_{\r} \ft{\varphi}(t)d\mu(t).
\end{align}
The proof is complete.
\end{proof}

\section{Examples of FS-pairs}\label{sec:exe}
We now give some examples of FS-pairs (see also the last section in \cite{G23}). We focus on the measures not contemplated by the result in \cite{G23}, that is, those pairs $(\mu,a)$ such that $\deg(\mu)\geq 3$.

\subsection{The Selberg Trace Formula}

One example of an FS-pair is the {Selberg Trace formula}. If $S$ is a compact hyperbolic surface then, for any even $\p\in\ C^{\infty}_c(\r)$, it holds that
\begin{equation}
\label{eq:Selberg_Trace_f}
\sum_{j\geq0} \ft{\p}\left(\frac{r_j}{2\pi}\right)=\frac{A(S)}{4\pi}\int_{\r} r\ft{\p}(r)\tanh(\pi r)dr+2\pi\sum_{\ga\in G(S)} \frac{\Lambda(\ga)}{N_\ga^{1/2}-N_\ga^{-1/2}}\p(\log(N_\ga)),
\end{equation}
where $A(S)$ is the {hyperbolic surface area} of $S$, $r_j\in\cp$ is a solution for $\la_j=1/4+r_j^2$, where $\{\la_j;j\geq0\}$ is the Laplacian spectrum on $S$, $G(S)$ is the set of closed oriented geodesics on $S$. If $\ga$ is a closed geodesic on $S$, then $N_\ga=e^{l(\ga)}$ is the {norm} of $\ga$ and $\Lambda(\ga)=l(\ga_0)$ the {length} of $\ga$, where $\ga_0$ is the unique oriented prime geodesic satisfying $\ga=\ga_0^m$, for some integer $m\geq1$. Here, $l(\ga)$ is the {hyperbolic length} of the curve $\ga$. For more details, see \cite[Thm. 5.6]{Ber}. Since $\tanh(\cd)$ is odd, by symmetrizing \eqref{eq:Selberg_Trace_f}, we obtain the following identity, which holds for any $\p\in C^{\infty}_c(\r)$:
\begin{align}
\label{eq:Selberg_Trace_symm}
&\frac{2\pi}{A(S)}\sum_{j\geq0} \left[\ft{\p}\left(\frac{r_j}{2\pi}\right)+\ft{\p}\left(-\frac{r_j}{2\pi}\right)\right]-\int_{\r} r\ft{\p}(r)\tanh(\pi r)dr=\\
&\frac{4\pi^2}{A(S)}\sum_{\ga\in G(S)} \frac{\Lambda(\ga)}{N_\ga^{1/2}-N_\ga^{-1/2}}\left[\p(\log(N_\ga))+\p(-\log(N_\ga))\right].
\end{align}
This gives a FS-pair $(\mu,a)$ with
\begin{center}
$\mu=\sum_{j\geq0}( \del_{r_j/2\pi}+\del_{-r_j/2\pi})-r\tanh(\pi r)\d r$\\
$a(\pm\log(N_\ga))=\frac{\Lambda(\ga)}{N_\ga^{1/2}-N_\ga^{-1/2}}$, for $\ga\in G(S)$.
\end{center}
Observe that $\deg(\mu)=3$ because of the contribution of the absolutely continuous part and the fact that $\sum_{j\geq 0} r_j^{-2-\ep}<\infty$ for $\ep>0$, which is a consequence of the Spectral Theorem for compact hyperbolic surfaces (see for instance \cite[Thm. 3.32]{Ber}). More generally, by taking \eqref{eq:Selberg_Trace_symm} for two compact hyperbolic surfaces $S_1$ and $S_2$, and taking the difference, we obtain a crystalline measure:
\begin{align*}
&\frac{1}{A(S_1)}\sum_{j\geq0} \left[\ft{\p}\left(\frac{r_j}{2\pi}\right)+\ft{\p}\left(-\frac{r_j}{2\pi}\right)\right]-\frac{1}{A(S_2)}\sum_{m\geq0} \left[\ft{\p}\left(\frac{s_m}{2\pi}\right)+\ft{\p}\left(-\frac{s_m}{2\pi}\right)\right]=\\
&\frac{2\pi}{A(S_1)}\sum_{\ga\in G(S_1)} \frac{\Lambda_{S_1}(\ga)}{N_{S_1,\ga}^{1/2}-N_{S_1,\ga}^{-1/2}}\left[\p(\log(N_{S_1\ga}))+\p(-\log(N_{S_1\ga}))\right]\\
&-\frac{2\pi}{A(S_2)}\sum_{\ga\in G(S_2)} \frac{\Lambda_{S_2}(\ga)}{N_{S_2,\ga}^{1/2}-N_{S_2,\ga}^{-1/2}}\left[\p(\log(N_{S_2\ga}))+\p(-\log(N_{S_2\ga}))\right],
\end{align*}
where $\{1/4+r_j^2;j\geq0\}$ and $\{1/4+s_m^2;m\geq0\}$ are the spectra of the Laplacian in $S_1$ and $S_2$, respectively. Again the measure $\mu=\sum_{j\leq0}( \del_{r_j/2\pi}+\del_{-r_j/2\pi})-\sum_{m\leq0} (\del_{s_m/2\pi}+\del_{-s_m/2\pi})$ has degree at most $3$. 

\subsection{A crystalline measure involving the sum of three squares function $r_3(n)$} 
Recall that, by Legendre's three square theorem, a non-negative integer $n$ can be written as the sum of three squares of integers if, and only if, $n$ is not of the form $4^a(8b+7)$ for non-negative integers $a,b$. We define the arithmetic function $r_3(n)$ to be the number of ways of representing $n$ as the sum of three squares of integers, this is, $r_3(n):=\#\{m\in\z^3;|m|_2^2=n\}$. We have $r_3(0)=1$ and $r_3(n)=0$ if $n$ is of the form $4^a(8b+7)$. By a result of \cite{CKO}, it holds that, for any positive $\ep$
\begin{equation}
\label{eq:mean_r_3}
\sum_{n\leq x} r_3(n)=\frac{4}{3}\pi x^{3/2}+O(x^{3/4+\ep}).
\end{equation}
Working out on an old example of Guinand, Meyer in \cite[Thm. 4]{Mey} constructed the crystalline measure
\begin{equation}
\mu=\sum_{j\geq1} \frac{\chi(n)r_3(n)}{\sqrt{n}}\left(\del_{\sqrt{n}/2}-\del_{-\sqrt{n}/2}\right),
\end{equation}
and he proved that $\ft{\mu}=-i\mu$. Here, the character $\chi$ is defined by
\begin{equation}
\chi(n)=\begin{cases} -1/2, &\text{ if } n\in\n\backslash4\n\\
4, &\text{ if } n\in4\n\backslash16\n\\
0, &\text{ if } n\in16\n.\end{cases}
\end{equation}
This gives rise to an FS-pair $(\mu,-i\mu)$. Because of \eqref{eq:mean_r_3} and the fact that $r_3(4n)=r_3(n)$, a simple integration-by-parts argument shows that $\deg(\mu)=3$.

\subsection{A generalized family of Guinand's measure} In the last section of \cite{G23}, a construction of Guinand's was generalized.  Let $\eta(z)=q\prod_{n\geq 1} (1-q^n)$ be Dedekind's eta-function, where $q=e^{2\pi i z}$ and $z\in \cp^+$. Consider now the following family
\begin{align*}
\frac{\eta(z)^{24c-2}\eta(4z)^{24c-2}}{\eta(2z)^{48c-5}} & = q^{c}\left(1-(24c - 2)q + (288c^2 - 36c)q^2 + O(q^3) \right) = \sum_{n\geq 0} \al_{n,c} q^{n+c},
\end{align*}
of modular forms  for a real number $c\in[0,1/8]$. Consider then the measure
$$
\mu_c=  \sum_{n\geq 0}\al_{n,c} (\del_{\sqrt{n+c}} + \del_{-\sqrt{n+c}}).
$$
It is shown in \cite{G23} that $\ft \mu=\mu$, that is, $(\mu,\mu)$ is a FS-pair. We  note that $\mu_0=\sum_{\n\in \z} \del_{n}$ produces Poisson's summation and $\mu_{1/9}$ is Guinand's construction in \cite{Gui}, although he did not came up with his construction this way. We also notice that if $c>1/8$, the coefficients  $|\al_{n,c}|$ grow exponentially (but some exceptional values of $c$) and so $(\mu,\mu)$ is not a FS-pair. However, it still possible to generate a summation formula although only for test functions $\p(x)$ which extend analytically and decay sufficiently fast in a strip $|\im z|<b$, for a suitable $b>0$. For $c\in [0,1/8]$, numerical experiments indicate that the coefficients oscillate erratically in the interval $[-1,1]$, which nevertheless would imply that $|\al_{n,c}|\leq 1$. Provably, the Hecke bound shows that  $|\al_{n,c}|\ll_c n^{1/4}$, and so $\deg(\mu_c)\leq 3$ (and conjecturally $\deg(\mu_c)=3$).

\subsection*{Acknowledgements.} The first author acknowledges support from the following funding agencies: The Office of Naval Research GRANT14201749 (award number N629092412126), The Serrapilheira Institute (Serra-2211-41824), FAPERJ (E-26/200.209/2023) and CNPq (309910/2023-4). The second author is supported by CNPq (141446/2023-4).

\subsection*{Competing interest.} The authors have no competing interest to declare.



\begin{thebibliography}{100}


\bibitem{ACV}
\textsc{L. Alon, A. Cohen and C. Vinzant},
\newblock {Every real-rooted exponential polynomial is the restriction of a Lee-Yang polynomial.}
\newblock Journal of Functional Analysis (2) {\bf 286} (2024).


\bibitem{AV}
\textsc{L. Alon and C. Vinzant},
\newblock {Gap distributions of Fourier quasicrystals via Lee-Yang polynomials.}
\newblock Rev. Mat. Iberoam. 40 (2024), no. 6, pp. 2203–2250.


\bibitem{BS}
\textsc{M. Baake, N. Strungaru},
\newblock {A note on tempered measures.}
\newblock Colloquium Mathematicum 172 (2023), 15-30.


\bibitem{Ber}
\textsc{N. Bergeron},
\newblock {The Spectrum of Hyperbolic Surfaces.}
\newblock Springer Cham, 2011 - Universitext - XIII, 370 pages.


\bibitem{Be}
\textsc{A. S. Besicovitch},
\newblock {Almost Periodic Functions.}
\newblock Dover Publications, 1954 - Fourier series - 180 pages.


\bibitem{Bo}
\textsc{H. Bohr},
\newblock {Zur Theorie der fastperiodischen Funktionen I.}
\newblock Acta Math. 45 (1925), p. 29-127.

\bibitem{BRS}
\textsc{A. Bondarenko, D. Radchenko and K. Seip}
Fourier Interpolation with Zeros of Zeta and L-Functions
Constructive Approximation 57 (2023), p. 405-461.


\bibitem{DLLS}
\textsc{A. Dijksma, H. Langer, A. Luger and Yu. Shondin,}
A factorization result for generalized Nevanlinna functions of the class $\N_\kappa$, Integr. equ. oper. theory 36 (2000) 121-125.


\bibitem{DL}
\textsc{K. Daho, H. Langer},
\newblock {Matrix Functions of the Class Nk.}
\newblock Math. Nachr.  120 (1985), no.~1, 275-294.


\bibitem{dB} 
\textsc{L.\ de Branges},
\newblock {Hilbert spaces of entire functions.}
\newblock Prentice Hall, Englewood Cliffs. NJ, 1968.


\bibitem{CKO} 
\textsc{S.K.K. Choi, A.V. Kumchev, R. Osburn},
\newblock {On sums of three squares.}
\newblock Int J Number Theory 01 (2005), 161-173.


\bibitem{CE} 
\textsc{H. Cohn, N. Elkies},
\newblock {New upper bounds on sphere packings I.}
\newblock Ann. of Math. (2) {\bf 157} (2003), no.~2, 689--714.


\bibitem{CKMRV}
\textsc{H. Cohn, A. Kumar, S. Miller, D. Radchenko, M. Viazovska},
\newblock {The sphere packing problem in dimension 24.}
\newblock Ann. of Math. (2) {\bf 185} (2017), no.~3, 1017--1033. 

  
\bibitem{G23}
\textsc{F. Gonçalves},
\newblock {A Classification of Fourier Summation Formulas and Crystalline Measures.}
\newblock arXiv:2312.11185.


\bibitem{Gui}
\textsc{A. P. Guinand}, Concordance and the harmonic analysis of sequences, Acta Math. 101(3-4): 235-271 (1959).




\bibitem{KL}
\textsc{M.G.Krein, H.Langer},
\newblock {Über einige Fortsetzungsprobleme, die eng mit der Theorie hermitescher Operatoren im Raume $\Pi_\kappa$ zusammenhängen. I. Einige Funktionenklassen und ihre Darstellungen.}
\newblock Math. Nachr. 77 (1977), 187-236.


\bibitem{KW}
\textsc{M. Kaltenbäck, H. Woracek},
\newblock {Polya class theory for Hermite-Biehler functions of finite order.}
\newblock J. London Math. Soc. {\bf 68} (2003), no.~2, 338--354.


\bibitem{KS}
\textsc{P. Kurasov and P. Sarnak}, Stable polynomials and crystalline measures, Journal of Mathematical Physics, 61(8):083501, 2020.

\bibitem{KNS}
\textsc{A. Kulikov, F. Nazarov and M. Sodin},
Fourier uniqueness and non-uniqueness pairs, Zurnal matematiceskoj fiziki, analiza, geometrii (2023).


\bibitem{LO} \textsc{N. Lev and A. Olevskii}, Fourier quasicrystals and discreteness of the diffraction spectrum, Adv. Math. 315 (2017), 1-26.

\bibitem{LO2} \textsc{N. Lev and A. Olevskii}, Quasicrystals and Poisson’s summation formula. Invent. math. 200, 585–606 (2015). 



\bibitem{L}
\textsc{B. Ja. Levin},
\newblock {Distribution of Zeros of Entire Functions.}
\newblock Translations of Mathematical Monographs Vol. 5, American Mathematical Soc., 1964.


\bibitem{Mey}
\textsc{Y. Meyer},
\newblock {Measures with locally finite support and spectrum.}
\newblock Proc. Natl. Acad. Sci. USA 113 (2016), 3152--3158.


\bibitem{MV}
\textsc{H. L. Montgomery, R. C. Vaughan},
\newblock {Multiplicative Number Theory I. Classical Theory.}
\newblock Cambridge studies in advanced mathematics Vol. 97, Cambridge University Press, 2007.


 \bibitem{OU}
\textsc{A. Olevskii and A. Ulanovskii}, Fourier quasicrystals with unit masses, Comptes Rendus, Mathématique, 358(11-12):1207–1211, 2020.

\bibitem{OU2}
\textsc{A. Olevskii and A. Ulanovskii}, A simple crystalline measure, arXiv:2006.12037v2 .


\bibitem{RV}
\textsc{D. Radchenko and M. Viazovska}, Fourier interpolation on the real line, Publ. Math. IHES 129 (2019), 51–81.

\bibitem{RS}
\textsc{J. Ramos and M. Sousa}, Fourier uniqueness pairs of powers of integers, J. Eur. Math. Soc. (JEMS) 24 (2022), no. 12, 4327-4351.

\bibitem{RS1}
\textsc{J. Ramos and M. Sousa}, Perturbed interpolation formular and applications, To appear in Analysis \& PDE.


\bibitem{RV}
\textsc{D. Radchenko, M. Viazovska},
\newblock {Fourier interpolation on the real line.}
\newblock Publ.math.IHES 129, 51–81, (2019).


\bibitem{Vi}
\textsc{M. Viazovska},
\newblock {The sphere packing problem in dimension 8.}
\newblock Ann. of Math. (2) {\bf 185} (2017), no.~3, 991--1015. 

    
\end{thebibliography}
\end{document}